\newtheorem{Theorem}{Theorem}[section]
\newtheorem{Lemma}{Lemma}[section]
\newtheorem{Proposition}{Proposition}[section]
\newtheorem{Remark}{Remark}[section]
\begin{document}
\title{Modeling of fluid flow in a flexible vessel with elastic walls}
\author{Vladimir Kozlov$^a$, Sergei Nazarov$^b$ and German Zavorokhin$^c$}
\date{}
\maketitle

\vspace{-6mm} 

\begin{center}
$^a${\it Department of Mathematics, Link\"oping University, \\ S--581 83 Link\"oping, Sweden E-mail: vlkoz@mai.liu.se\\
$^b$ St.Petersburg State University,
Universitetsky pr., 28, Peterhof, St. Petersburg, 198504, Russia, and Department of Mathematics, Link\"oping University \\

$^c$ St.Petersburg Department of the Steklov Mathematical Institute, Fontanka, 27, 191023, St.Petersburg, Russia E-mail: zavorokhin@pdmi.ras.ru}

\end{center}

\bigskip \noindent {\bf Abstract.}
We exploit a two-dimensional model \cite{4}, \cite{3} and \cite{1} describing the elastic behavior of the wall of a flexible blood vessel which takes interaction with
surrounding muscle tissue and the 3D fluid flow into account. We
study time periodic flows in a cylinder with such compound   boundary conditions. The main result is that solutions of this problem do not depend on the period and they are nothing else but the time independent Poiseuille flow. Similar solutions of the Stokes equations for the rigid wall (the no-slip  boundary condition) depend on the period  and their profile depends on time.

{\it Keywords and phrases}: Blood vessel with elastic walls, demension reduction procedure, periodic in time flows, Poiseuille flow.

\vspace{-6mm} 

\section{Introduction}

In any book about the human circulatory system, one can read  that the elasticity of the composite walls of the arteries and the muscle material surrounding arteria's bed significantly contributes to the transfer of blood pushed by the heart along the arterial tree. In addition, hardening of the walls of blood vessels caused by calcification or other diseases makes it much more difficult to supply blood to peripheral parts of the system. At the same time, the authors could not find an answer to a natural question anywhere in the medical and applied mathematical literature:
 how is the elasticity support mechanism for the blood supply system? In numerous publications, modeling  the circulatory system, computational or pure theoretical, there are no fundamental differences between the steady flows of viscous fluid in pipes with rigid walls and vessels with elastic walls. Moreover, quite often much attention is ungroundedly paid to nonlinear convective terms in the Navier-Stokes equations, although blood as a multi-component viscoelastic fluid should be described by much more involved integro-differential equations. We also note that for technical reasons, none of the primary one-dimensional models of a single artery or the entire arterial tree obtained using dimension reduction procedures includes the terms generated by these nonlinear terms.

In connection with the foregoing, in this paper we consider the linear non-stationary Stokes equations simulating a heartbeat, we study time-periodic solutions in a straight infinite cylinder with an arbitrary cross-section.

In the case of the Dirichlet (no-slip) condition, according to well-known results, there are many periodic solutions $({\bf v},p)$, where the velocity ${\bf v}$ has only the component directed along the cylinder ($z$-axis) and the pressure $p$ depends linear on $z$ with coefficients depending only on the time variable $t$. For elastic walls, there is only one such solution up to a constant factor, proportional to the steady Poiseuille flow which does not depend on the time variable but can be considered as periodic one with any period. This is precisely the difference in behaviour of blood flow between elastic  and rigid walls. The former smooths (at removal from the heart) the blood flow entering the aorta, and then into the artery with sharp, clearly defined jerks, this is how the heart works with a heart valve, and the latter reproduce the frequency of the flow throughout the length of the pipe.

Due to the elastic walls of arteries, an increased heart rate only leads to an increase in the speed of blood flow (the flux grows) without changing the structure of the flow as a whole, and only at an ultra-high beat rate, when the wall elasticity is not enough to compensate for the flow pulsation, the body begins to feel heart beating. 
The fact is that the human arterial system is geometrically arranged in a very complex system, gently conical shape of blood vessels, their curvature and a considerable number of bifurcation nodes. Therefore, the considered model problem of an infinite straight cylinder gives only a basic approximation to the real circulatory system and on some of its elongated fragments, which acquires periodic disturbances found in the wrists, temples, neck and other periphery of the circulatory system. The correctness of such views is also confirmed by a full-scale experiment on watering the garden: a piston pump delivers water with shocks, but with a long soft hose the water jet at the outlet is unchanged, but with a hard short-pulsating one.

A general two-dimensional model describing the elastic behaviour of the wall of a  flexible vessel has been presented in the case of a straight vessel in \cite{3}, \cite{5}, in the case of a curved vessel in \cite{1} and for numerical results see \cite{BKKN1}, \cite{BGKN}. The wall has a laminate structure consisting of several anisotropic elastic layers of varying thickness and is assumed to be much thinner than the radius of the channel which itself is allowed to vary. This two-dimensional model takes the interaction of the wall with  surrounding or supporting elastic material of muscles and the fluid flow into account and is obtained via a dimension reduction procedure.
We study  the time-periodic  flow in the straight flexible vessel with elastic walls. In comparison with the Stokes problem for the vessel with rigid walls, we prove that 

Compared with the classical works \cite{2} and \cite{Wom2} of J.R. Womersley,  for an alternative description of the above works see \cite{VCIA}, our formulation of problem has much in common. In Wormerley's works, axisymmetric pulsative blood flow in a vessel with circular isotropic elastic wall is found as a perturbation of the steady Poisseulle flow. Apart from inessential generalizations like arbitrary shape of vessel's cross-section and orthotropic wall, the main difference of our paper is in the coefficient $K(s)$ which describe the reaction of the surrounding cell material on deformation of the wall. In other words, the vessel is assumed in \cite{2}, \cite{Wom2} to "hang in air" while in our paper it is placed inside the muscular arteria's bed as in human and animal bodies intended to compensate for external and internal influences. An evident experiment shows that a rubber or plastic hose uses to wriggle under pulsative water supply.






\section{Problem statement}
\subsection{Preliminaries}
Let $\Omega$ be a bounded, simple connected domain in the plane $\Bbb R^2$ with $C^{1,1}$ boundary $\gamma =\partial\Omega$ and let us introduce the spatial cylinder
\begin{equation}\label{shell}
{\mathcal C}=\{x=(y,z)\in \mathbb R^2\times \mathbb R:\ y=(y_1,y_2)\in\Omega,\ z\in \Bbb R\}.
\end{equation}
We assume that the curve $\gamma$ is parameterised as $(y_1,y_2)=(\zeta_1(s),\zeta_2(s))$, where $s$  is the arc length along $\gamma$ measured
counterclockwise from a certain point and $\zeta=(\zeta_1,\zeta_2)$ is a vector $C^{1,1}$ function. The length of the countour $\gamma$ is denoted by $|\gamma|$ and its curvature  by
$$
\kappa=\kappa(s)=\zeta_1''(s)\zeta_2'(s)-\zeta_2''(s)\zeta_1'(s).
$$
 In a neighborhood of
$\gamma$, we introduce the natural curvilinear
orthogonal coordinates system $(n,s)$, where $n$ is the oriented
distance to $\gamma$ ($n>0$ outside $\Omega$).

The boundary of the cylinder ${\mathcal C}$ is denoted by $\Gamma$, i.e.
\begin{equation}
\Gamma=\{x=(y,z):\,y\in\gamma, z\in\Bbb R\}.
\end{equation}

The flow in the vessel is described by the velocity vector ${\bf
v}=({\bf v}_1,{\bf v}_2,{\bf v}_3)$ and by the pressure $p$ which are subject to the non-stationary Stokes  equations:
\begin{equation}\label{Intr1}
\partial_t{\bf v}-\nu\Delta {\bf v}+\nabla p=0\;\;\mbox{and}\;\;\nabla\cdot{\bf v}=0\;\;\mbox{in ${\mathcal C}\times\Bbb R\ni (x,t)$} .
\end{equation}
Here  $\nu>0$ is the
kinematic viscosity related to the dynamic viscosity $\mu$ by
$\nu=\mu/\rho_b$, where $\rho_b>0$ is the density of the fluid.

The  elastic properties of the 2D boundary
are described by the displacement vector
${\bf u}$  defined on  $\Gamma$ and they are presented in \cite{4}, \cite{2} for a straight cylinder and in \cite{1} for a curve-linear cylinder. If we use the curve-linear coordinates $(s,z)$ on $\Gamma$ and write the vector ${\bf u}$ in the basis ${\bf n}$, ${\boldsymbol \tau}$ and ${\bf z}$, where ${\bf n}$ is the outward unit normal vector, ${\boldsymbol \tau}$ is the tangent vector to the curve $\gamma$ and ${\bf z}$ is the direction of $z$ axis, then the balance equation has  the following form:
\begin{eqnarray}\label{M1k}
&&D(\kappa,-\partial_s,-\partial_z)^T\,Q(s)
D(\kappa,\partial_s,\partial_z){\bf
u}\nonumber\\
&&+\rho(s)\partial_t^2{\bf u}+K(s){\bf u}+\sigma(s){\mathcal F}=0\;\;\mbox{in $\Gamma\times\Bbb R$},
\end{eqnarray}
where $\rho(s)$ is the average density of the vessel wall,
$\sigma=\rho_b/h$, $h=h(s)>0$ is the thickness of the wall, $A^T$ stands for the transpose of a matrix $A$, $ D(\kappa,\partial_s,\partial_z)=D_0\partial_z+D_1(\partial_s)$, where
\begin{equation}\label{Intr6j}
D_0=\left(\begin{array}{lll}
0& 0 & 0  \\
0 &0 &1\\
0 & \frac{1}{\sqrt{2}}  &0
\end{array}
\right ),\;D_1(\partial_s)=\left(\begin{array}{lll}
\kappa & \partial_s & 0  \\
0 &0 &0\\
0 & 0  &\frac{1}{\sqrt{2}} \partial_s
\end{array}
\right )
\end{equation}
and $K(s){\bf u}=(k(s){\bf u}_1,0,0)$.
Here $k(s)$ is  a scalar function,  $Q$ is a $3\times 3$ symmetric positive definite matrix of homogenized elastic moduli (see \cite{1}) and the displacement vector ${\bf u}$ is written in the curve-linear coordinates $(s,z)$ in the basis ${\bf n}$, ${\boldsymbol \tau}$ and ${\bf z}$.
Furthermore ${\mathcal F}=({\mathcal F}_n,{\mathcal F}_z,{\mathcal F}_s)$ is the hydrodynamical force  given by
\begin{equation}\label{M2}
{\mathcal F}_n=-p+2\nu\frac{\partial v_n}{\partial n},\;\;{\mathcal F}_s=\nu\Big
(\frac{\partial v_n}{\partial s}+\frac{\partial v_s}{\partial
n}-\kappa v_s\Big ),\;\;{\mathcal F}_z=\nu\Big (\frac{\partial
v_n}{\partial z}+\frac{\partial v_z}{\partial n}\Big),
\end{equation}
where $v_n$ and $v_s$ are the velocity components in the direction
of the normal ${\bf n}$ and the tangent ${\boldsymbol \tau}$, respectively,
whereas $v_z$ is the longitudinal velocity component.
The functions $\rho$, $k$ and the elements of the matrix $Q$ are bounded measurable functions satisfying
\begin{equation}\label{rok}
\rho(s)\geq\rho_0>0\;\;\mbox{ and}\;\;k(s)\geq k_0>0.
\end{equation}
 The elements of the matrix $Q$ are assumed to be Lipschitz continuous and $\langle Q\xi,\xi\rangle\geq q_0|\xi|^2$ for all $\xi\in\Bbb R^3$  with $q_0>0$, where $\langle\cdot,\cdot\rangle$ is the cartesian inner product in $\Bbb R^3$.

We note that
$$
D(\kappa,\partial_s,\partial_z){\bf u}^T=(\kappa {\bf u}_1+\partial_s{\bf u}_2,\partial_z{\bf u}_3,\frac{1}{\sqrt{2}}(\partial_z{\bf u}_2+\partial_s{\bf u}_3))^T
$$
and one can easily see that
\begin{equation}\label{Mars21a}
\kappa {\bf u}_1+\partial_s{\bf u}_2={\boldsymbol \varepsilon}_{ss}({\bf u}),;\;\partial_z{\bf u}_3={\boldsymbol \varepsilon}_{zz}({\bf u})\;\;\mbox{and}\;\;\partial_z{\bf u}_2+\partial_s{\bf u}_3=2{\boldsymbol \varepsilon}_{sz}({\bf u})
\end{equation}
on $\Gamma$. Here ${\boldsymbol \varepsilon}_{ss}({\bf u})$, ${\boldsymbol \varepsilon}_{zz}({\bf u})$ and ${\boldsymbol \varepsilon}_{sz}({\bf u})$ are components of the deformation tensor in the basis $\{{\bf n},{\boldsymbol \tau},{\bf z}\}$. In what follows we will write the displacement vector as ${\bf u}=({\bf u}_1,{\bf u}_2,{\bf u}_3)$, where ${\bf u}_1={\bf u}_n$, ${\bf u}_2={\bf u}_s$ and ${\bf u}_3={\bf u}_z$. For the velocity ${\bf v}$ we will use indexes $1,2$ and $3$ for the components of ${\bf v}$ in $y_1$, $y_2$ and $z$ directions respectively.

Furthermore the vector functions ${\bf v}$ and ${\bf u}$ are connected on the boundary by the relation
\begin{equation}\label{Feb5}
{\bf v}=\partial_t{\bf u}\;\;\mbox{on $\Gamma\times\Bbb R$.}
\end{equation}

The problem (\ref{Intr1})--(\ref{Feb5}) appears when we deal with a flow in a pipe surrounded by a thin layered elastic wall which separates the flow from the muscle tissue. Since we have in mind an application to the blood flow in the blood circulatory system, we are interested in periodic in time solutions.
One of goals of this paper is to describe all periodic solutions to the problem (\ref{Intr1}), (\ref{M1k}), (\ref{Feb5}) which are bounded in $\Bbb R\times {\mathcal C}\ni (t,x)$.

It is reasonable to compare property of solutions to this problem with similar properties of solutions to the Stokes system (\ref{Intr1}) supplied with
the no-slip boundary condition
\begin{equation}\label{Feb5d}
{\bf v}=0\;\;\mbox{on $\Gamma$.}
\end{equation}
Considering the problem (\ref{Intr1}), (\ref{Feb5d}) we  assume that the boundary $\gamma$ is Lipschitz only.

 The following result about the problem (\ref{Intr1}), (\ref{Feb5d}) is possibly known but we present a concise proof for reader's convenience.

\begin{Theorem}\label{T1} Let the boundary $\gamma$ be Lipschitz and $\Lambda>0$. There exists $\delta>0$ such that if $({\bf v},p)$ are $\Lambda$-periodic in time functions satisfying  (\ref{Intr1}), (\ref{Feb5d}) and may admit a certain exponential growth at infinity
\begin{equation}\label{Feb17aa}
\max_{0\leq t\leq\Lambda}\int_{\mathcal C}e^{-2\delta |z|}(|\nabla {\bf v}(x,t)|^2+|\nabla\partial_z{\bf v}|^2+|p(x,t)|^2)dx <\infty.
\end{equation}
 Then
\begin{equation}\label{Feb17ba}
p=zp_*(t)+p_0(t),\;\;{\bf v}(x,t)=(0,0,{\bf v}_3(y,t)),
\end{equation}
where $p$ and ${\bf v}_3$ are $\Lambda$-periodic functions in $t$ which satisfy the problem
\begin{eqnarray}\label{Feb17ca}
&&\partial_t{\bf v}_3-\nu\Delta_y {\bf v}_3+p_*(t)=0\;\;\mbox{in $\Omega\times\Bbb R$}\nonumber\\
&&{\bf v}=0\;\;\mbox{on $\gamma\times\Bbb R$}.
\end{eqnarray}
\end{Theorem}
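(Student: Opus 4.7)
I would combine a Fourier series in $t$ with a Saint-Venant-type energy estimate in $z$, using Bogovskii's construction to eliminate the pressure. The periodicity in time allows the expansion
\[
{\bf v}(x,t)=\sum_{k\in\mathbb{Z}}{\bf v}^{(k)}(x)e^{i\omega_kt},\quad p(x,t)=\sum_{k\in\mathbb{Z}}p^{(k)}(x)e^{i\omega_kt},\quad\omega_k=2\pi k/\Lambda,
\]
so each Fourier coefficient satisfies the resolvent Stokes system $i\omega_k{\bf v}^{(k)}-\nu\Delta{\bf v}^{(k)}+\nabla p^{(k)}=0$, $\nabla\cdot{\bf v}^{(k)}=0$, ${\bf v}^{(k)}|_\Gamma=0$ in the infinite cylinder, with (\ref{Feb17aa}) transferring to each mode by Parseval. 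The goal is to show that every ${\bf v}^{(k)}$ is $z$-independent with only the $z$-component nontrivial, and $p^{(k)}$ is linear in $z$ with a constant slope.

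The central reduction is to show that ${\bf w}^{(k)}:=\partial_z{\bf v}^{(k)}\equiv 0$. Differentiating in $z$, ${\bf w}^{(k)}$ again solves the resolvent Stokes system (with pressure $\partial_z p^{(k)}$) and satisfies no-slip. Integrating incompressibility over a cross-section and using no-slip shows that $\int_\Omega {\bf v}^{(k)}_3(y,z)\,dy$ is independent of $z$, whence $\int_\Omega{\bf w}^{(k)}_3\,dy=0$ on every slice. This is the compatibility condition that lets Bogovskii's lemma on the annulus $A_R=\{R\leq|z|\leq R+1\}\times\Omega$ produce $\Phi^{(k)}\in H^1_0(A_R)^3$ with $\nabla\cdot\Phi^{(k)}=2\chi_R\chi_R'{\bf w}^{(k)}_3$ and $\|\nabla\Phi^{(k)}\|\leq C\|{\bf w}^{(k)}\|_{L^2(A_R)}$, where $\chi_R$ is a smooth cut-off equal to $1$ on $[-R,R]$ and supported in $[-R-1,R+1]$. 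The test function $\Psi^{(k)}:=\overline{{\bf w}^{(k)}}\chi_R^2-\Phi^{(k)}$ is compactly supported and divergence-free, so $\partial_z p^{(k)}$ drops out when the equation for ${\bf w}^{(k)}$ is paired against $\Psi^{(k)}$. Cauchy--Schwarz and Young estimates then produce a Saint-Venant inequality
\[
E_k(R)\leq C_k\bigl(E_k(R+1)-E_k(R)\bigr),\quad E_k(R):=\int_{|z|<R}\bigl(|\omega_k||{\bf w}^{(k)}|^2+\nu|\nabla{\bf w}^{(k)}|^2\bigr)\,dx.
\]
Iteration forces either $E_k\equiv 0$ or geometric growth $E_k(R)\geq(1+1/C_k)^R E_k(0)$; for $\delta$ smaller than half the growth rate, (\ref{Feb17aa}) rules out the second alternative, so ${\bf w}^{(k)}\equiv 0$ for every $k$.

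With ${\bf v}^{(k)}={\bf v}^{(k)}(y)$, incompressibility becomes $\nabla_y\cdot{\bf v}^{(k)}_\perp=0$, and testing the cross-sectional resolvent equation against $\overline{{\bf v}^{(k)}_\perp}$ in $\Omega$ eliminates the pressure and gives $\nu\int_\Omega|\nabla_y{\bf v}^{(k)}_\perp|^2\,dy=0$, hence ${\bf v}^{(k)}_\perp\equiv 0$. The $y$-momentum equations then force $\nabla_y p^{(k)}=0$, and the $z$-momentum equation shows $\partial_z p^{(k)}$ is constant in $x$; thus $p^{(k)}=zp_*^{(k)}+p_0^{(k)}$ and ${\bf v}^{(k)}_3$ solves the $k$-th Fourier component of (\ref{Feb17ca}). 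Reassembling the series yields (\ref{Feb17ba}) and (\ref{Feb17ca}).

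\textbf{Main obstacle.} The heart of the argument is the Bogovskii-corrected Saint-Venant inequality. Since (\ref{Feb17aa}) controls $p$ but not $\partial_z p$ in weighted $L^2$, the pressure cannot be absorbed by direct integration by parts against a simple cut-off; using a divergence-free test function is the only clean route, and it works precisely because the zero-flux identity $\int_\Omega{\bf w}^{(k)}_3\,dy\equiv 0$ holds. A secondary technical point is the $k$-dependence of $C_k$: one must verify that a single $\delta>0$ works for all Fourier modes, which follows either from a direct uniform estimate or from the spectrum of the Stokes operator pencil in the cylinder, whose gap around the imaginary axis grows like $\sqrt{|\omega_k|/\nu}$ for large $|k|$, so $\inf_k\log(1+1/C_k)>0$.
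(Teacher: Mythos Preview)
Your argument is correct and follows a genuinely different route from the paper's. After the common first step (Fourier series in $t$, reducing to the resolvent Stokes problem for each mode), the paper takes the Fourier transform in $z$ and analyses the resulting operator pencil $\mathcal{S}(\lambda)$: it shows that the only eigenvalue in a strip $|\Re\lambda|<\beta^*$ is $\lambda=0$ (Lemma~\ref{Lem1a}), derives uniform resolvent bounds on lines $\Re\lambda=\beta$, and then proves Theorem~\ref{T4} by splitting the solution with a cut-off in $z$, correcting the divergence via an auxiliary two-dimensional Stokes problem, and invoking the residue theorem through Proposition~\ref{Prop1}(ii). Your approach bypasses the pencil entirely: you work directly with $\partial_z{\bf v}^{(k)}$, exploit the zero-flux identity to run Bogovskii on the transition slabs, and obtain the Saint-Venant inequality $E_k(R)\le C\bigl(E_k(R{+}1)-E_k(R)\bigr)$ with a constant $C$ that---as your own estimates show, via Poincar\'e on the cross-section---can indeed be taken independent of $k$. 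This gives a uniform exponential dichotomy and hence a single admissible $\delta$; the appeal to the spectral gap of the pencil in your closing remark is therefore unnecessary. The trade-off is that your method is more elementary and self-contained for this Dirichlet problem, while the paper's pencil framework is what carries over to the elastic-wall case (Theorem~\ref{T2}), where the boundary coupling makes a direct energy argument of your type considerably harder to set up.
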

 Thus the dimension of the space of periodic solutions to the problem (\ref{Intr1}), (\ref{Feb5d}) is infinite and they can be parameterised by a periodic function $p_*$.
In the case of elastic wall situation is quite different

\begin{Theorem}\label{T2} Let the boundary $\gamma$ be $C^{1,1}$ and $\Lambda>0$. Let also $({\bf v},p,{\bf u})$ be a $\Lambda$-periodic with respect to $t$ solution to the problem
(\ref{Intr1})--(\ref{Feb5}) admitting an arbitrary power growth at infifnity
\begin{eqnarray}\label{Mars5}
&&\max_{0\leq t\leq\Lambda}\Big(\int_{\mathcal C}(1+|z|)^{-N} (|{\bf v}|^2+|\nabla_x {\bf v}|^2+|\nabla_x\partial_z {\bf v}|^2+|p|^2)dx\\
&&+\int_{\Gamma}(1+|z|)^{-N}(|u|^2+\sum_{k=2}^3(|\nabla_{sz}u_k|^2+|\nabla_{sz}\partial_zu_k|^2))dsdz\Big)< \infty\nonumber
\end{eqnarray}
for a certain $N>0$.
Then
\begin{equation}\label{Mars1}
p=zp_0+p_1,\;\;{\bf v}_1= {\bf v}_2=0,\;\;{\bf v}_3=p_0{\bf v}_*(y),
\end{equation}
where $p_0$ and $p_1$ are constants and ${\bf v}_*$ is the Poiseuille profile, i.e.
\begin{equation}\label{M9a}
\nu\Delta_y{\bf v}_*=1\;\;\;\mbox{in $\Omega$\;\; and}\;\;{\bf v}_*=0\;\;\mbox{on $\gamma$}.
\end{equation}
The boundary displacement vector ${\bf u}={\bf u}(s,z)$ satisfies the equation
\begin{equation}\label{Mars1a1}
D(\kappa(s),-\partial_s,-\partial_z)^T\,Q(s)
D(\kappa(s),\partial_s,\partial_z){\bf u}+K{\bf u}=\sigma(p,0,p_0\nu \partial_n{\bf v}_3|\gamma)^T.
\end{equation}
If the elements $Q_{21}$ and $Q_{31}$ vanish then the function ${\bf u}$ is a polynomial of second degree in $z$: ${\bf u}(s,z)=(0,\alpha,\beta)^Tz^2+{\bf u}^{(1)}(s)z+{\bf u}^{(2)}(s)$, where $\alpha$ and $\beta$ are constants.
\end{Theorem}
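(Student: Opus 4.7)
The plan is to expand $({\bf v}, p, {\bf u})$ in a Fourier series in $t$, dispose of the zero mode via Theorem~\ref{T1}, and prove that every nonzero Fourier mode vanishes by combining a joint fluid/wall energy identity with the polynomial growth hypothesis (\ref{Mars5}); the final polynomial assertion is then an ODE computation on $\gamma$.

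Writing ${\bf v}(x,t) = \sum_k {\bf v}^{(k)}(x)e^{i\omega_k t}$ with $\omega_k = 2\pi k/\Lambda$ and similarly for $p$ and ${\bf u}$, the bound (\ref{Mars5}) is inherited mode-by-mode and the equations decouple into
\begin{align*}
& i\omega_k{\bf v}^{(k)} - \nu\Delta{\bf v}^{(k)} + \nabla p^{(k)} = 0,\ \nabla\cdot{\bf v}^{(k)} = 0\ \text{in }\mathcal C,\\
& D^TQD{\bf u}^{(k)} + (K-\omega_k^2\rho){\bf u}^{(k)} + \sigma\mathcal F({\bf v}^{(k)}, p^{(k)}) = 0\ \text{on }\Gamma,\\
& {\bf v}^{(k)} = i\omega_k{\bf u}^{(k)}\ \text{on }\Gamma.
\end{align*}
For $k=0$ the kinematic coupling yields ${\bf v}^{(0)} = 0$ on $\Gamma$, so $({\bf v}^{(0)}, p^{(0)})$ is a steady no-slip Stokes flow satisfying (\ref{Feb17aa}) (since (\ref{Mars5}) dominates any weighted $e^{-2\delta|z|}$-norm). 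Theorem~\ref{T1} applies and gives $p^{(0)} = zp_0 + p_1$ with $p_0, p_1$ constants and ${\bf v}^{(0)} = (0,0,p_0{\bf v}_*(y))$, where ${\bf v}_*$ is the Poiseuille profile (\ref{M9a}); at $k=0$ the wall equation is precisely (\ref{Mars1a1}).

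Fix $k\neq 0$ and set $\mathcal C_R = \mathcal C\cap\{|z|<R\}$, $\Gamma_R = \Gamma\cap\{|z|<R\}$. Pair the Stokes equation with $\overline{{\bf v}^{(k)}}$ and integrate over $\mathcal C_R$, using the Cauchy stress $-p^{(k)}\delta_{ij} + 2\nu\varepsilon_{ij}({\bf v}^{(k)})$ and $\nabla\cdot {\bf v}^{(k)} = 0$ to identify the boundary flux on $\Gamma_R$ as $-\int_{\Gamma_R}\mathcal F^{(k)}\cdot\overline{{\bf v}^{(k)}}\, ds\, dz$. Pair the wall equation with $\sigma^{-1}\overline{{\bf v}^{(k)}} = -i\omega_k\sigma^{-1}\overline{{\bf u}^{(k)}}$ on $\Gamma_R$, integrate the $D^T$-term by parts in $(s,z)$, and add the two identities; the shared surface traction cancels modulo boundary contributions at $z = \pm R$ and harmless commutator terms from $\partial_s\sigma^{-1}$. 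Taking the real part leaves
$$
2\nu\|\varepsilon({\bf v}^{(k)})\|^2_{\mathcal C_R} = \mathrm{Re}\, J(R),
$$
where $J(R)$ collects boundary fluxes through $\{|z|=R\}\cap\Omega$ and the rings of $\Gamma$ at $z = \pm R$. The integrand of $J(R)$ is quadratic in quantities controlled in (\ref{Mars5}) by the $(1+|z|)^{-N}$-weighted $L^2$-norm, so $\int_0^\infty(1+R)^{-N-1}|J(R)|\,dR < \infty$ and one extracts $R_n\to\infty$ along which $R_n^{-N}|J(R_n)|\to 0$. Combined with the monotonicity of the left-hand side (and, if needed, a dyadic iteration on tails), this forces $\varepsilon({\bf v}^{(k)})\equiv 0$ on $\mathcal C$. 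Korn rigidity plus $\nabla\cdot {\bf v}^{(k)} = 0$ give ${\bf v}^{(k)} = a + b\wedge x$; the Stokes equation then reads $\nabla p^{(k)} = -i\omega_k(a+b\wedge x)$, which is curl-free only when $b = 0$. Hence ${\bf v}^{(k)}\equiv a$ is constant, $p^{(k)}$ is affine, and on $\Gamma$, ${\bf u}^{(k)} = a/(i\omega_k)$ is a constant Cartesian vector. Plugging this back into the wall equation and using $k_0, \rho_0 > 0$ together with the explicit form of $\mathcal F$ at a constant ${\bf v}^{(k)}$ (where $v_n = v_s = 0$ and $\partial_n v_z = 0$ in the case $a \parallel z$, with analogous pointwise checks for the transverse components of $a$) is incompatible with (\ref{M1k}) unless $a = 0$. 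Summing the Fourier series then produces (\ref{Mars1}) and (\ref{Mars1a1}). For the polynomial assertion, (\ref{Mars1a1}) has a right-hand side affine in $z$, so seek ${\bf u}(s,z) = A(s)z^2 + {\bf u}^{(1)}(s)z + {\bf u}^{(2)}(s)$. Only $A$ enters the $z^2$-coefficient of $D^TQD{\bf u} + K{\bf u}$; using (\ref{Intr6j}), (\ref{Mars21a}) and $Q_{21} = Q_{31} = 0$, the $s$-equations for $A_1, A_2, A_3$ decouple into three scalar ODEs whose $|\gamma|$-periodic solutions, by the positivity of $Q_{11}, Q_{33}, k$, are $A_1\equiv 0$ and $A_2\equiv\alpha$, $A_3\equiv\beta$ with constants $\alpha, \beta$; the lower-order $s$-coefficients then determine ${\bf u}^{(1)}(s)$ and ${\bf u}^{(2)}(s)$.

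The hard part is making the real-part argument rigorous under only a polynomial growth hypothesis: a priori both $\|\varepsilon({\bf v}^{(k)})\|^2_{\mathcal C_R}$ and $J(R)$ can grow like $R^N$, so the Chebyshev-plus-monotonicity extraction is essential in order to actually conclude that the left-hand side is bounded and ultimately that $\varepsilon({\bf v}^{(k)})\equiv 0$. A second subtlety is the passage from a constant Cartesian rigid motion to zero, since (\ref{Mars21a}) shows that the shell strain of a constant Cartesian vector is typically nontrivial in the local basis $({\bf n}, {\boldsymbol\tau}, {\bf z})$, so the contradiction between the wall equation and the hydrodynamic traction $\mathcal F$ must be read off by a careful pointwise comparison along $\gamma$.
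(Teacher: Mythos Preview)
Your Fourier decomposition and the treatment of the zero mode via Theorem~\ref{T1} match the paper. The genuine gap is in the step for $k\neq 0$.

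The real-part identity $2\nu\|\varepsilon({\bf v}^{(k)})\|^2_{\mathcal C_R}=\mathrm{Re}\,J(R)$ is correct, but it does not yield $\varepsilon\equiv 0$ under the hypothesis~(\ref{Mars5}). From $\int_{\mathcal C}(1+|z|)^{-N}(\cdots)\,dx<\infty$ you can only extract a sequence $R_n\to\infty$ with the \emph{slice} quantities bounded by $o(R_n^{N-1})$, hence $|J(R_n)|=o(R_n^{N-1})$. Monotonicity of the left-hand side then gives $\|\varepsilon\|^2_{\mathcal C_R}=o(R^{N-1})$ for all $R$, which is vacuous when $N>1$. Your ``dyadic iteration on tails'' cannot close the loop: $J(R)$ is built from traces of $p$, $v$, $\nabla v$, $u$, $\nabla u$ at $z=\pm R$, and an improved interior bound on $\varepsilon$ does not feed back to control these traces (there is no Saint-Venant type inequality available here linking $J(R)$ to the tail energy). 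The imaginary part is no help either, since the wall contribution carries the indefinite term $-\omega_k^2\rho|u|^2$. In short, the direct energy method works only when $N\le 1$, whereas the theorem is stated for arbitrary $N>0$.

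The paper avoids this by taking the Fourier transform in $z$ as well and analysing the resulting operator pencil $\Theta(\lambda)$ on $Z\times L^2(\Omega)$ (Sect.~\ref{Sub3}--\ref{Sub5}). Coercivity is established only for $|\xi|$ large (Lemma~\ref{Lka1}(ii)); for bounded $\xi$ one uses that $\Theta(\lambda_1)-\Theta(\lambda_2)$ is compact, so the spectrum is discrete, and a direct kernel computation (including the rigid-motion case you sketch, handled via Lemma~\ref{La1}) shows there are no eigenvalues on the imaginary axis. This yields a strip $|\Re\lambda|<\beta^*(\omega)$ free of spectrum (Proposition~\ref{Pr29a}), from which Theorem~\ref{T26a} follows by a splitting $({\bf V},{\bf U},{\bf P})=({\bf V}^{(+)},{\bf U}^{(+)},{\bf P}^{(+)})+({\bf V}^{(-)},{\bf U}^{(-)},{\bf P}^{(-)})$ and the residue theorem. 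The absence of spectrum in a full strip is exactly what lets one pass from polynomial (indeed exponential) growth to the zero solution; your energy argument has no analogue of this spectral gap. Note also that $\beta^*$ depends on $\omega$, which is why~(\ref{Mars5}) is stated with polynomial rather than exponential weights: a fixed exponential weight would not be admissible simultaneously for all Fourier modes $k$.
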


Thus, in the case of elastic wall all periodic solutions  are independent of $t$ and hence are the same for any period. Moreover inside the cylinder the flow takes
the Poiseuille form.
The above theorems have different requirements on the behavior of solutions with respect to $z$, compare (\ref{Feb17aa}) and (\ref{Mars5}). This is because of the following reason. In the case of the Dirichlet boundary condition we can prove a resolvent estimate on the imaginary axis ($\lambda=i\omega$, $\omega$ is real) with exponential weights independent on $\omega$. In the case of the elastic boundary condition exponential weights depends on $\omega$. Becuase of that we can not put in (\ref{Mars5}) the same exponential weight as in (\ref{Feb17aa}).

The structure of our paper is the following. In Sect.\ref{Sect1} we treat the Stokes system with the no-slip condition on the boundary of cylinder. Since we are dealing with time-periodic solutions the problem can be reduced to a series of time independent problems with a parameter (frequency). The main result there is Theorem \ref{T4}. Using this assertion it is quite straightforward to proof the main theorem \ref{T1} for the Dirichlet problem. Parameter dependent problems are studied in Sect.\ref{SEC1a}-\ref{SEC2a}. Theorem \ref{T4} is proved in Sect.\ref{S35}.

Stokes problem in a vessel with elastic walls is considered in Sect.\ref{Sect2x}. We also reduce the time periodic problem to a series of time independent problems depending on a parameter. The main result there is Theorem \ref{T26a}. Using this result we prove our main theorem \ref{T2} for the case of elastic wall in Sect.\ref{SEC3a}. The parameter depending problem is studied in Sect.\ref{Sub1}-\ref{Sub5}. The proof of  Theorem \ref{T26a} is given in Sect.\ref{Sect27a}. In Sect.\ref{Sub6} we consider the case when the parameter in the elastic wall problem is vanishing. This consideration completes the proof of Theorem \ref{T2}.

\section{Dirichlet problem for the Stokes system}\label{Sect1}

The first step in the proof of Theorem \ref{T1} is the following reduction of the time dependent problem to time independent one. Due to $\Lambda$-periodicity of our solution we can represent it in the form
\begin{equation}\label{Mars20a}
{\bf v}(x,t)=\sum_{k=-\infty}^\infty {\bf V}_k(x)e^{2\pi kit/\Lambda},\;\;p(x,t)=\sum_{k=-\infty}^\infty {\bf P}_k(x)e^{2\pi kit/\Lambda},
\end{equation}
where
\begin{equation}\label{Mars20aa}
{\bf V}_k(x)=\frac{1}{\Lambda}\int_0^\Lambda {\bf v}(x,t)e^{-2\pi kit/\Lambda}dt,\;\;P_k(x)=\frac{1}{\Lambda}\int_0^\Lambda p(x,t)e^{-2\pi kit/\Lambda}dt.
\end{equation}
These coefficients satisfy the following time independent problem

\begin{equation}\label{Feb5ba}
i\omega{\bf V}-\nu\Delta {\bf V}+\nabla
P={\bf F}\;\;\mbox{and}\;\;\nabla\cdot{\bf V}=0\;\;\mbox{in ${\mathcal C}$},
\end{equation}
 with the Dirichlet boundary condition
\begin{equation}\label{Feb5da}
{\bf V}=0\;\;\mbox{on $\Gamma$}
\end{equation}
and with $\omega=2\pi k/\Lambda$ and ${\bf F}=({\bf F}_1,{\bf F}_2,{\bf F}_3)=0$ (for further analysis it is convenient to have an arbitrary ${\bf F}$).


\begin{Theorem}\label{T4} There exist a positive number $\beta^*$ depending only on $\Omega$ and $\nu$ such that
for $\beta\in (0,\beta^*)$ the only solution to problem (\ref{Feb5ba}), (\ref{Feb5da}) with ${\bf F}=0$ which may admit a certain exponential growth
\begin{equation}\label{Feb17a}
\int_{\mathcal C}e^{-2\beta |z|}\big(|\nabla {\bf V}|^2+|\nabla \partial_z{\bf V}|^2+|P|^2\big)dydz<\infty
\end{equation}
is
\begin{equation}\label{Feb17b}
{\bf V}(x)=p_0(0,0,{\hat v}(y))\;\;\mbox{and}\;\;P(x)=p_0z+p_1,
\end{equation}
where $p_0$ and $p_1$ are constants and ${\hat v}$ satisfies
\begin{equation}\label{Feb17c}
i\omega \hat{v}-\nu\Delta \hat{v}+1=0\;\;\mbox{in $\Omega$},\;\;\hat{v}=0\;\;\mbox{on $\gamma$.}
\end{equation}
\end{Theorem}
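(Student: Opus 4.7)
The plan is to reduce (\ref{Feb5ba})--(\ref{Feb5da}) to spectral analysis of the operator pencil on the cross-section $\Omega$ obtained by replacing $\partial_z$ by $\lambda$: separated solutions $e^{\lambda z}({\mathcal V}(y),{\mathcal P}(y))$ of the homogeneous problem are precisely the non-trivial solutions of the pencil equation
\begin{equation*}
i\omega{\mathcal V}-\nu(\Delta_y+\lambda^2){\mathcal V}+(\partial_{y_1}{\mathcal P},\partial_{y_2}{\mathcal P},\lambda{\mathcal P})^T=0,\;\; \partial_{y_1}{\mathcal V}_1+\partial_{y_2}{\mathcal V}_2+\lambda{\mathcal V}_3=0\;\text{in }\Omega,\;\;{\mathcal V}|_\gamma=0,
\end{equation*}
while Jordan chains correspond to polynomial-in-$z$ prefactors. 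I would first verify by direct substitution that (\ref{Feb17b})--(\ref{Feb17c}) solves (\ref{Feb5ba})--(\ref{Feb5da}) with ${\bf F}=0$; it is the length-two Jordan chain at $\lambda=0$ spanned by the constant-pressure eigenfunction $({\mathcal V},{\mathcal P})=(0,1)$ together with its associated function $((0,0,\hat v),z)$.

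The heart of the argument is to show that no other kernel element of $\mathfrak A(\lambda)$ exists in a strip $|\mathrm{Re}\,\lambda|<\beta^*$. For purely imaginary $\lambda=i\xi$, I would pair the momentum equation with $\overline{\mathcal V}$, integrate over $\Omega$, use ${\mathcal V}|_\gamma=0$ to discard boundary terms, and exploit $\lambda+\overline\lambda=0$ together with the transformed incompressibility to cancel the pressure contribution, arriving at
\begin{equation*}
i\omega\int_\Omega|{\mathcal V}|^2\,dy+\nu\int_\Omega|\nabla_y{\mathcal V}|^2\,dy+\nu\xi^2\int_\Omega|{\mathcal V}|^2\,dy=0,
\end{equation*}
whose real part forces ${\mathcal V}\equiv 0$ for $\xi\neq 0$ and hence ${\mathcal P}\equiv 0$; at $\xi=0$ the kernel is exactly the one-dimensional constant-pressure subspace. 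To see that the Jordan chain at $\lambda=0$ terminates at length two, I would expand a hypothetical polynomial solution order by order in $z$ and invoke the compatibility condition $\int_\Omega g\,dy=0$ for the inhomogeneous divergence equation $\partial_{y_1}{\mathcal V}_1+\partial_{y_2}{\mathcal V}_2=g$ with Dirichlet trace, combined with the fact that $\int_\Omega\hat v\,dy\neq 0$ (which follows for $\omega\neq 0$ from the identity $\mathrm{Im}\int_\Omega\hat v\,dy=\omega\|\hat v\|_{L^2(\Omega)}^2$ obtained by testing (\ref{Feb17c}) against $\overline{\hat v}$, and for $\omega=0$ from the maximum principle). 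Analytic Fredholm theory together with the standard perturbation argument then opens the single-point spectrum on the imaginary axis into an eigenvalue-free strip of some positive width $\beta^*$ around $i\Bbb R\setminus\{0\}$.

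Given the eigenvalue-free strip, the weighted hypothesis (\ref{Feb17a}) permits the partial Laplace transform of $({\bf V},P)$ in $z$, holomorphic in $|\mathrm{Re}\,\lambda|<\beta$; shifting the inversion contour from $\mathrm{Re}\,\lambda=\beta$ to $\mathrm{Re}\,\lambda=-\beta$ picks up only the residue at $\lambda=0$, which reproduces the Jordan-chain solution (\ref{Feb17b})--(\ref{Feb17c}). The profile $\hat v$ is uniquely fixed because $i\omega-\nu\Delta_y$ with homogeneous Dirichlet data is an isomorphism for every real $\omega$. The main obstacle I anticipate is the quantitative one: extending the eigenvalue-free strip uniformly as $|\mathrm{Im}\,\lambda|\to\infty$ requires an Agmon-type resolvent bound for $\mathfrak A(\lambda)$ on the contours $\mathrm{Re}\,\lambda=\pm\beta$ together with a pressure estimate robust in $\lambda$, both hinging on a careful use of the Stokes structure at large imaginary frequencies.
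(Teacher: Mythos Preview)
Your spectral-pencil outline is essentially the same framework the paper uses (operator pencil on the cross-section, eigenvalue-free strip $|\Re\lambda|<\beta^*$ with the sole eigenvalue at $\lambda=0$ carrying the Jordan chain $(0,1)$, $((0,0,\hat v),0)$), and your energy identity on $\Re\lambda=0$ matches the paper's Lemma~\ref{LEk1}. The resolvent estimate you flag as the ``main obstacle'' is indeed established in the paper via Lemmas~\ref{LEk1}--\ref{Lem1a}, so that part is fine.

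There is, however, a genuine gap in your final step. You write that the hypothesis~(\ref{Feb17a}) ``permits the partial Laplace transform of $({\bf V},P)$ in $z$, holomorphic in $|\Re\lambda|<\beta$'', and then shift the inversion contour across $\lambda=0$. But (\ref{Feb17a}) only says $e^{-\beta|z|}({\bf V},P)\in L^2$, i.e.\ the solution may grow like $e^{\beta|z|}$ in \emph{both} directions $z\to\pm\infty$. For such a function the Fourier--Laplace transform does not converge on any vertical line, let alone define a holomorphic function in a strip; there is no contour to shift. The paper handles exactly this obstruction by first subtracting a multiple of the Poiseuille solution to kill the flux $\int_\Omega {\bf V}_3\,dy$, then splitting $({\bf V},P)=\zeta({\bf V},P)+(1-\zeta)({\bf V},P)$ with a cut-off $\zeta(z)$ and adding a divergence corrector $\zeta'({\bf W},0)$ (built from a $z$-parametrized two-dimensional Stokes problem) so that each half lies in a one-sided weighted space $W^{1,2}_{\pm\beta}\times L^2_{\pm\beta}$ and solves the system with a compactly supported right-hand side. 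Only then does Proposition~\ref{Prop1}(ii) (the contour-shift/residue argument you have in mind) apply to each half separately. Your proposal would go through once this localisation step is inserted; without it the Laplace-transform argument as stated does not start.
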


\begin{Remark}\label{R1} From (\ref{Feb17c}) it follows that
$$
\int_\Omega \hat{v}(y)dy=i\omega\int_\Omega |\hat{v}|^2du-\nu\int_\Omega |\nabla\hat{v}|^2dy,
$$
i.e. the flux does not vanish for this solution.
\end{Remark}

We postpone the proof of the above theorem to Sect.\ref{S35} and in the next section we present the proof of Theorem \ref{T1}

\subsection{Proof of Theorem \ref{T1}}

By (\ref{Feb17aa})
\begin{equation}\label{Feb17aaa}
\int_{\mathcal C}e^{-2\delta |z|}(|\nabla {\bf V}_k(x)|^2+|{\bf P}_k(x)|^2)dx <\infty,
\end{equation}

Applying Theorem \ref{T4} and assuming $\delta<\beta^*$, we get that
$$
V_k=(0,0,\hat{v}_k(y)),\;\;P_k=zp_{0k}+p_{1k},
$$
where $p_{0k}$ and $p_{1k}$ are constants. This implies that ${\bf v}_1={\bf v}_2=0$, ${\bf v}_3$ depends only on $y, t$ and $p=p_0(t)z+p_1(t)$, which proves the required assertion.

\subsection{System for coefficients (\ref{Feb5ba}), (\ref{Feb5da})}\label{SEC1a}

To describe the main solvability result for the problem (\ref{Feb5ba}), (\ref{Feb5da}), let us introduce some function spaces. For $\beta\in\Bbb R$ we denote by $L^{2}_\beta({\mathcal C})$ the space of functions on ${\mathcal C}$  with the finite norm
$$
||u;L^{2}_\beta({\mathcal C})||=\Big(\int_{{\mathcal C}}e^{2\beta z}|u(x)|^2dx\Big)^{1/2}.
$$
By $W^{1,2}_\beta({\mathcal C})$ we denote the space of functions in ${\mathcal C}$ with the finite norm
$$
||v;W^{1,2}_\beta({\mathcal C})||=\int_{\mathcal C}e^{2\beta z}(|\nabla_x v|^2+|v|^2)dx\Big)^{1/2}.
$$
 We will use the same notation for spaces of vector functions.

\begin{Proposition}\label{Prop1} Let the boundary $\gamma$ is Lipschitz and $\omega\in\Bbb R$. There exist $\beta^*>0$ independent of $\omega$ such that the following assertions are valid:

 {\rm (i)} for any $\beta\in (-\beta^*,\beta^*)$, $\beta\neq 0$ and  ${\bf F}\in L^2_\beta({\mathcal C})$,  the problem (\ref{Feb5ba}), (\ref{Feb5da}) has a unique solution $({\bf V},{\bf P})$ in $W_\beta^{1,2}({\mathcal C})^3\times L^2_\beta({\mathcal C})$ satisfying the estimate
\begin{equation}\label{Mars30a}
||{\bf V};W^{1,2}_\beta({\mathcal C})||+||{\bf P};L_2^\beta({\mathcal C})||\leq C||{\bf F};L_2^\beta({\mathcal C})||.
\end{equation}
where $C$ may depend on $\beta$, $\nu$ and $\Omega$. Moreover,
\begin{equation}\label{Mars30av}
||\partial_z{\bf V};W^{1,2}_\beta({\mathcal C})||\leq C||{\bf F};L^2_\beta({\mathcal C})||.
\end{equation}

{\rm (ii)} Let $\beta\in (0,\beta^*)$ and ${\bf F}\in L^2_\beta({\mathcal C})\cap L^{2}_{-\beta}({\mathcal C})$. Then solutions $({\bf V}_\pm,{\bf P}_\pm)\in W_{\pm\beta}^{1,2}({\mathcal C})^3\times L^{2}_{\pm\beta}({\mathcal C})$ to (\ref{Feb5ba}), (\ref{Feb5da}) are connected by
\begin{equation}\label{Mars27a}
{\bf V}_-(x)={\bf V}_+(x)+p_0(0,0,\hat{v}(y)),\;\;{\bf P}_-(x)={\bf P}_+(x)+p_0z+p_1,
\end{equation}
with certain constants $p_0$ and $p_1$. Here $\hat{v}$ is solution to (\ref{Feb17c}).

\end{Proposition}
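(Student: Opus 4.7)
The plan is to Fourier--Laplace transform in $z$, reducing (\ref{Feb5ba})--(\ref{Feb5da}) to a family of parameter-dependent boundary value problems on the cross-section $\Omega$. Setting $\partial_z\mapsto\lambda$ with $\lambda=-\beta+i\xi$, the system becomes a quadratic operator pencil $\mathfrak A(\lambda,\omega)$ acting on $(\widetilde{\mathbf V},\widetilde P)$ on $\Omega$ via
\[
i\omega\widetilde{\mathbf V}-\nu(\Delta_y+\lambda^2)\widetilde{\mathbf V}+(\partial_{y_1}\widetilde P,\partial_{y_2}\widetilde P,\lambda\widetilde P)^T=\widetilde{\mathbf F},\quad \partial_{y_1}\widetilde V_1+\partial_{y_2}\widetilde V_2+\lambda\widetilde V_3=0,
\]
with $\widetilde{\mathbf V}|_\gamma=0$. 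By Parseval, solutions in $L^2_\beta(\mathcal C)$ correspond precisely to families $\widetilde{\mathbf V}(\cdot,\lambda)$ on the line $\mathrm{Re}\,\lambda=-\beta$.

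The central analytical step is to prove, for $\lambda$ in a vertical strip $|\mathrm{Re}\,\lambda|\le\beta^*$ with $\beta^*>0$ independent of $\omega\in\mathbb R$, a parameter-elliptic resolvent estimate of the form
\[
\|\widetilde{\mathbf V}\|_{H^1(\Omega)}+|\lambda|\,\|\widetilde{\mathbf V}\|_{L^2(\Omega)}+\|\widetilde P\|_{L^2(\Omega)}\le C\|\widetilde{\mathbf F}\|_{L^2(\Omega)},
\]
together with the improved bound $|\lambda|\|\widetilde{\mathbf V}\|_{H^1}+|\lambda|^2\|\widetilde{\mathbf V}\|_{L^2}\le C\|\widetilde{\mathbf F}\|_{L^2}$ for large $|\lambda|$. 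These follow from the classical Stokes resolvent argument combined with an energy identity obtained by testing with $\overline{\widetilde{\mathbf V}}$, which supplies coercivity controlled by $\mathrm{Re}\,\lambda$ and the lower-order $i\omega$-term. This simultaneously gives Fredholmness of index zero and shows that the spectrum of $\mathfrak A(\cdot,\omega)$ in the strip is discrete; verifying that it consists only of $\lambda=0$ amounts to a separation-from-the-imaginary-axis statement, which I would establish at $\omega=0$ by a direct Stokes-pencil argument and extend to all $\omega\in\mathbb R$ by combining the above coercivity with a perturbation/continuity argument in $\omega$.

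At $\lambda=0$ the pencil decouples into a 2D Stokes system for $(V_1,V_2,P)$ and a scalar heat-type equation $i\omega V_3-\nu\Delta_y V_3=F_3$. The kernel is one-dimensional, spanned by $(\widetilde{\mathbf V}^{(0)},\widetilde P^{(0)})=(0,0,0,1)$; a direct Jordan-chain computation yields the generalized eigenvector $(\widetilde{\mathbf V}^{(1)},\widetilde P^{(1)})=(0,0,\hat v,0)$ with $\hat v$ the Poiseuille-type profile of (\ref{Feb17c}), and the chain terminates at length two because $\int_\Omega\hat v\,dy\ne 0$ (Remark \ref{R1}) obstructs the compatibility condition at the next step. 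The corresponding polynomial-in-$z$ homogeneous solutions of the original problem are exactly $\bigl((0,0,0);\,1\bigr)$ and $\bigl((0,0,\hat v);\,z\bigr)$, matching the right-hand side of (\ref{Mars27a}).

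With the resolvent estimate and the spectral description in place, part (i) follows by Parseval: the pointwise-in-$\xi$ bound on $\mathrm{Re}\,\lambda=-\beta$ integrates to (\ref{Mars30a}), and multiplying by $|\lambda|$ and using the improved high-frequency bound yields the $z$-derivative gain (\ref{Mars30av}). Part (ii) follows from contour deformation: representing $(\mathbf V_\pm,P_\pm)$ by the inverse transform on $\mathrm{Re}\,\lambda=\mp\beta$ and shifting one contour to the other crosses only the second-order pole of $\mathfrak A(\cdot,\omega)^{-1}$ at $\lambda=0$, whose residue is, by the Jordan-chain calculation above, exactly $p_0(0,0,\hat v)$ for the velocity and $p_0z+p_1$ for the pressure, with $p_0,p_1$ determined by $\widetilde{\mathbf F}(y,0)$. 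The main obstacle is the uniformity of $\beta^*$ in $\omega$: standard Stokes-resolvent constants tend to deteriorate as $|\omega|\to\infty$, and a priori a spectral point of $\mathfrak A(\cdot,\omega)$ could drift toward the imaginary axis. I expect this to require combining a sector bound produced by the $i\omega$-coercivity for large $|\omega|$ with a uniform perturbation argument on bounded $|\omega|$ based on the explicit description of the spectrum at $\omega=0$.
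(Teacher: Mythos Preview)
Your overall architecture is the paper's: reduce by Fourier--Laplace to the pencil $\mathcal S(\lambda)$ on $\Omega$, prove a resolvent estimate on lines $\Re\lambda=\beta$ for $|\beta|<\beta^*$, invoke Parseval for (i), and shift the contour across the sole eigenvalue $\lambda=0$ with its length-two Jordan chain $((0,0,0,1),(0,0,\hat v,0))$ for (ii). That part is fine.

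Where you diverge from the paper is in handling the $\omega$-uniformity of $\beta^*$, and your proposed route (analyse $\omega=0$, then perturb in $\omega$ on compacts, then use $i\omega$-coercivity for large $|\omega|$) is unnecessarily complicated. The paper never perturbs in $\omega$; it works directly on the imaginary axis $\lambda=i\xi$ for \emph{all} $\omega$ at once. Testing the pencil equation with $\bar v$ gives
\[
\mu\int_\Omega|v|^2+\nu\int_\Omega|\nabla v|^2=\int_\Omega f\cdot\bar v,\qquad \mu=i\omega+\nu\xi^2,
\]
and separating real and imaginary parts immediately yields $(1+|\omega|+\xi^2)\|v\|_0+(1+|\omega|+\xi^2)^{1/2}\|\nabla v\|_0\le C\|f\|_0$ with $C$ depending only on $\nu,\Omega$; the $i\omega$ term only improves the bound. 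A separate estimate for $0<|\lambda|<\delta$ (again $\omega$-uniform) is needed because the pressure bound degenerates at $\lambda=0$. One then perturbs in $\Re\lambda$, not in $\omega$: the difference of the sesquilinear forms at $\beta+i\xi$ and $i\xi$ is $O(\beta)$ uniformly in $\omega$ and $\xi$, so the estimates persist on $\Re\lambda=\beta$ for all small $\beta$, giving $\beta^*$ independent of $\omega$ for free. This replaces your ``main obstacle'' with a one-line observation.
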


\begin{Remark} If $\gamma$ is $C^{1,1}$ then  it follows from \cite{Fa2} that the left-hand side in (\ref{Mars30a}) can be replaced by
$$
||{\bf V};W^{2,2}_\beta({\mathcal C})||+||\nabla {\bf P};L_2^\beta({\mathcal C})||.
$$

\end{Remark}

\subsection{Operator pencil, weak formulation}\label{S19a}

We will use the spaces of complex valued functions $H^1_0(\Omega)$, $L^2(\Omega)$ and $H^{-1}(\Omega)$ and the corresponding norms are denoted by $||\cdot ||_1$, $||\cdot ||_0$ and $||\cdot ||_{-1}$ respectively.

Let us introduce an operator pencil by
\begin{equation}\label{Feb7a}
{\mathcal S}(\lambda)\begin{pmatrix} v \\ p \end{pmatrix}=\begin{pmatrix} \mu  v_1-\nu\Delta_y v_1+\partial_{y_1}p \\
\mu v_2-\nu\Delta_y v_2+\partial_{y_2}p\\
\mu v_3-\nu\Delta_y v_3+\lambda p\\
\partial_{y_1}v_1+\partial_{y_2}v_2+\lambda v_3\end{pmatrix},\;\;\mu=i\omega -\nu\lambda^2,
\end{equation}
where $v=(v_1,v_2,v_3)$ is a vector function  and $p$ is a scalar function in $\Omega$ . This pencil is defined for vectors $(v,p)$  such that $v=0$ on $\gamma$.

Clearly
\begin{equation}\label{Feb20a}
{\mathcal S}(\lambda)\; :\;H^1_0(\Omega)^3\times L^2(\Omega)\rightarrow (H^{-1}(\Omega))^3\times L^2(\Omega)
\end{equation}
is a bounded operator for all $\lambda\in\Bbb C$. The following problem is associated with this operator
\begin{eqnarray}\label{Feb7b}
&&\mu  v_1-\nu\Delta_y v_1+\partial_{y_1}p=f_1,\nonumber\\
&&\mu v_2-\nu\Delta_y v_2+\partial_{y_2}p=f_2,\nonumber\\
&&\mu v_3-\nu\Delta_y v_3+\lambda p=f_3\;\;\mbox{in $\Omega$}
\end{eqnarray}
and
\begin{equation}\label{Feb7c}
\partial_{y_1}v_1+\partial_{y_2}v_2+\lambda v_3=h\;\;\mbox{in $\Omega$}
\end{equation}
supplied with the Dirichlet condition
\begin{equation}\label{Feb7d}
v=0\;\;\mbox{on $\partial\Omega$.}
\end{equation}


The corresponding sesquilinear form is given by
\begin{eqnarray*}
&&\Bbb A(v,p;\hat{v},\hat{p};\lambda)=\int_\Omega \sum_{j=1}^3(\mu v_j\overline{\hat{v}_j}+\nu \nabla_yv_j\cdot\nabla_y\overline{\hat{v}_j})dy\\
&&-\int_\Omega p\overline{(\nabla_y\cdot\hat{v'}-\bar{\lambda}\hat{v}_3)}dy+\int_\Omega(\nabla_y\cdot v'+\lambda v_3)\overline{\hat{p}}dy
\end{eqnarray*}
where $v'=(v_1,v_2)$. This form is well-defined on $H_0^1(\Omega)^3\times L^2(\Omega)$.

The weak formulation of (\ref{Feb7b})--(\ref{Feb7d}) can be written as
\begin{equation}\label{Mars10b}
\Bbb A(v,p;\hat{v},\hat{p};\lambda)=\int_\Omega (f\cdot\overline{\hat{v}}+h\overline{\hat{p}})dy
\end{equation}
for all $(\hat{v},\hat{p})\in H_0^1(\Omega)^3\times L^2(\Omega)$.
As it was shown in the proof of  Lemma 3.2(ii)\cite{Fa} the operator ${\mathcal S}(\lambda)$ is isomorphism for $\lambda=i\xi$, $\xi\in\Bbb R\setminus \{0\}$.
Since the operator corresponding to the difference of the forms $\Bbb A$ for different $\lambda$ is compact, the operator pencil
${\mathcal S}(\lambda)$ is  Fredholm for all $\lambda\in\Bbb C$ and its spectrum   consists of isolated eigenvalues of finite algebraic multiplicity, see \cite{GK}.


\subsection{Operator pencil near the imaginary axis $\Re\lambda=0$}\label{SEC2a}

Here we consider the right-hand sides in (\ref{Feb7b})--(\ref{Feb7d}) as follows $f\in L^2(\Omega)$ and $g\in L^2(\Omega)$

The next assertion is  proved in Lemma 3.2(i) \cite{Fa}, after a straightforward modification.

\begin{Lemma}\label{L10a} Let $h\in L^2(\Omega)$ and $\lambda\in\Bbb C$, $\lambda\neq 0$. Then the equation
\begin{equation}\label{Mars6}
\partial_{y_1}w_1+\partial_{y_2}w_2+\lambda w_3=h\;\;\mbox{in $\Omega$}
\end{equation}
has a solution $w\in  H_0^1(\Omega)$ satisfying the estimate
$$
\sum_{j=1}^2||w_j||_{1}\leq C(||h||_0+|\alpha|),\;\;||w_3||_1\leq C\frac{|\alpha|}{|\lambda|}
$$
where
$$
\alpha=\int_\Omega hdy
$$
and $C$ depends only on $\Omega$. The mapping $h\mapsto w$ can be chosen linear.
\end{Lemma}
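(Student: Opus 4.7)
The plan is to split the equation into a divergence-free part (handled by a Bogovski\u\i-type lifting) and a mean-value part (handled by the extra term $\lambda w_3$), exploiting the fact that the compatibility obstruction for the 2D divergence equation with Dirichlet data is precisely the integral over $\Omega$.

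First, I would fix once and for all an auxiliary function $\chi \in H^1_0(\Omega)$ with $\int_\Omega \chi\,dy = 1$ (e.g.\ any nonnegative bump in $C_c^\infty(\Omega)$, normalized), and set
\[
g(y) = h(y) - \alpha\,\chi(y), \qquad \alpha = \int_\Omega h\,dy.
\]
Then $g \in L^2(\Omega)$, $\int_\Omega g\,dy = 0$, and $\|g\|_0 \le \|h\|_0 + |\alpha|\,\|\chi\|_0$.

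Second, I would invoke the standard Bogovski\u\i\ result for Lipschitz domains in $\mathbb{R}^2$: there exists a bounded linear operator $\mathcal{B}\colon \{g\in L^2(\Omega): \int g=0\} \to H_0^1(\Omega)^2$ such that $\partial_{y_1}(\mathcal{B}g)_1 + \partial_{y_2}(\mathcal{B}g)_2 = g$ with $\|\mathcal{B}g\|_1 \le C_\Omega \|g\|_0$. Set $(w_1,w_2) = \mathcal{B}g$, so
\[
\sum_{j=1}^{2}\|w_j\|_1 \le C(\|h\|_0 + |\alpha|).
\]

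Third, for the remaining constant-mean part $\alpha\chi$, define
\[
w_3(y) = \frac{\alpha}{\lambda}\,\chi(y) \;\in\; H^1_0(\Omega),
\]
so that $\lambda w_3 = \alpha\chi$ and $\|w_3\|_1 = \frac{|\alpha|}{|\lambda|}\|\chi\|_1 \le C \frac{|\alpha|}{|\lambda|}$. Adding up, $\partial_{y_1}w_1 + \partial_{y_2}w_2 + \lambda w_3 = g + \alpha\chi = h$, which is the required identity, with both estimates satisfied. Linearity of $h \mapsto w$ is immediate, since $\alpha$ depends linearly on $h$, the operator $\mathcal{B}$ is linear, and $\chi$ is fixed.

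I do not expect any genuine obstacle: the only non-trivial input is the existence of Bogovski\u\i's right inverse of the divergence on Lipschitz planar domains, which is classical. The one point to verify is that the constant $C$ in the final estimate depends only on $\Omega$ and not on $\lambda$, which is transparent from the construction since $\lambda$ enters only through the explicit factor $1/|\lambda|$ in the $w_3$-bound.
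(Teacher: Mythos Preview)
Your proof is correct. The paper itself does not give an argument for this lemma; it simply cites Lemma~3.2(i) of \cite{Fa} and remarks that a ``straightforward modification'' suffices. Your explicit construction---subtracting $\alpha\chi$ to reduce to the mean-zero case, applying the Bogovski\u\i\ operator for $(w_1,w_2)$, and absorbing the constant-mean part into $w_3=\alpha\chi/\lambda$---is exactly the kind of standard argument one expects behind such a citation, and it makes the dependence of the constants on $\Omega$ alone (and the $1/|\lambda|$ factor for $w_3$) completely transparent.
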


The proof of the next lemma can be extracted from the proof of Lemma 3.2(ii) \cite{Fa}.

\begin{Lemma}\label{LEk1} Let $f\in L_2(\Omega)$, $h=0$ in (\ref{Feb7c}) and  let $\lambda=i\xi$, $0\neq \xi\in\Bbb R$. Then the solution to {\rm (\ref{Feb7b})--(\ref{Feb7d})} admits the estimate
\begin{equation}\label{Feb20b}
(1+|\omega|+|\xi|^2)||v||_0+(1+|\omega|+|\xi|^2)^{1/2}||\nabla v||_0\leq C||f||_0,
\end{equation}
and
\begin{equation}\label{Feb20bw}
 ||p||_0\leq C\frac{1+|\lambda|}{|\lambda|}||f||_0,
\end{equation}
where the constant $C$ depends only on $\nu$ and $\Omega$.
\end{Lemma}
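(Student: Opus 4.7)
The plan is to prove (\ref{Feb20b}) by a weak energy identity and (\ref{Feb20bw}) by a duality argument based on Lemma \ref{L10a}; the purely imaginary character of $\lambda=i\xi$ enters essentially at both steps.

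First I would test (\ref{Mars10b}) with $(\hat v,\hat p)=(v,p)$. Since $h=0$, the constraint $\nabla_y\cdot v'+\lambda v_3=0$ eliminates the third term of $\Bbb A$; substituting $\overline{\nabla_y\cdot v'}=-\bar\lambda\bar v_3$ into the pressure cross-term $-\int_\Omega p\,\overline{(\nabla_y\cdot v'-\bar\lambda v_3)}\,dy$ produces the factor $\lambda+\bar\lambda$, which vanishes because $\lambda\in i\mathbb R$. Only the diagonal block survives, leaving
$$
\nu\,||\nabla v||_0^2+(\nu\xi^2+i\omega)\,||v||_0^2=\int_\Omega f\cdot\bar v\,dy.
$$
Taking real and imaginary parts gives $\nu\,||\nabla v||_0^2\leq||f||_0\,||v||_0$, $\nu\xi^2\,||v||_0\leq||f||_0$, and $|\omega|\,||v||_0\leq||f||_0$. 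The first combined with Poincar\'e's inequality in $H^1_0(\Omega)$ yields $||v||_0+||\nabla v||_0\leq C||f||_0$; adding this to the other two produces $(1+|\omega|+\xi^2)\,||v||_0\leq C||f||_0$. The matching bound for $||\nabla v||_0$ follows by splitting into the regimes $|\omega|+\xi^2\leq 1$ (Poincar\'e alone suffices) and $|\omega|+\xi^2\geq 1$ (plug $||v||_0\leq||f||_0/(|\omega|+\nu\xi^2)$ back into $\nu\,||\nabla v||_0^2\leq||f||_0\,||v||_0$).

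Next, to recover the pressure I would apply Lemma \ref{L10a} with datum $p$ to produce a test field $\hat v\in H^1_0(\Omega)^3$ satisfying $\nabla_y\cdot\hat v'+\lambda\hat v_3=p$ together with the bounds $\sum_{j=1}^2||\hat v_j||_1\leq C(||p||_0+|\alpha|)$ and $||\hat v_3||_1\leq C|\alpha|/|\lambda|$, where $\alpha=\int_\Omega p\,dy$. Using $\bar\lambda=-\lambda$ once more, one computes $\overline{\nabla_y\cdot\hat v'-\bar\lambda\hat v_3}=\bar p$, so testing (\ref{Mars10b}) against $(\hat v,0)$ converts the pressure cross-term into $-||p||_0^2$ and gives
$$
||p||_0^2=\int_\Omega\sum_j\bigl(\mu v_j\overline{\hat v_j}+\nu\nabla v_j\cdot\nabla\overline{\hat v_j}\bigr)dy-\int_\Omega f\cdot\overline{\hat v}\,dy.
$$
Since $|\alpha|\leq|\Omega|^{1/2}\,||p||_0$, Lemma \ref{L10a} gives $||\hat v||_1\leq C\,\frac{1+|\lambda|}{|\lambda|}\,||p||_0$. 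Combined with $|\mu|\leq C(1+|\omega|+\xi^2)$ and the velocity bounds from the first step, which together yield $|\mu|\,||v||_0+\nu\,||\nabla v||_0\leq C||f||_0$, and after dividing out $||p||_0$, this delivers (\ref{Feb20bw}).

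The main obstacle is the precise bookkeeping of the two pressure cross-terms in $\Bbb A$: the double cancellation requires simultaneously that $v$ satisfies the generalized divergence equation and that $\lambda\in i\mathbb R$, and is the only point where the hypothesis $\Re\lambda=0$ is used essentially. The unavoidable blow-up of (\ref{Feb20bw}) as $|\lambda|\to 0$ is inherited directly from the $|\alpha|/|\lambda|$ bound on $\hat v_3$ supplied by Lemma \ref{L10a}, reflecting the degeneracy of the divergence-type map $(w_1,w_2,w_3)\mapsto\nabla_y\cdot w'+\lambda w_3$ at $\lambda=0$.
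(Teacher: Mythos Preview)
Your proof is correct and follows essentially the same approach as the paper: testing the weak form with $(\hat v,\hat p)=(v,p)$ to obtain the velocity estimate, then applying Lemma \ref{L10a} with datum $p$ to construct a test field that recovers $\|p\|_0^2$ from the pressure cross-term. Your write-up is in fact more explicit than the paper's about why the pressure terms cancel when $\lambda\in i\mathbb R$ and about deducing the weighted gradient bound, but the skeleton of the argument is identical.
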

\begin{proof} To estimate  the norm of $v$ by the right-hand side in (\ref{Feb20b}), we take $\hat{v}=v$ in (\ref{Mars10b}) and obtain
$$
\int_\Omega (\mu |v|^2+\nu |\nabla_yv|^2)dy=\int_\Omega f\cdot\bar{v}dy.
$$
This implies the inequality (\ref{Feb20b}).
 To estimate the norm of $p$, we choose $\nabla_{i\xi} w=p$. Then the relation (\ref{Mars10b}) becomes
$$
\int_\Omega |p|^2dy=\int_\Omega (\mu v\cdot\overline{w}+\nu\nabla v\cdot\nabla\overline{w}-f\overline{w})dy.
$$
Since by Lemma \ref{L10a}
$$
||w||_1\leq C\frac{1+|\lambda|}{|\lambda|}||p||_0,
$$
we arrive at (\ref{Feb20bw}).
\end{proof}

\begin{Lemma}\label{LEk2} There exists a positive number $\delta$  depending on $\nu$ and $\Omega$ such that if $f\in L_2(\Omega)$, $h=0$  and  $0<|\lambda|<\delta$, then the problem
{\rm (\ref{Feb7b})--(\ref{Feb7d})} has a unique solution satisfying
\begin{equation}\label{Feb20ba}
(|\omega|+1)||v||_0+(|\omega|+1)^{1/2}||v||_1+ ||p-p_m||_0+|\lambda|\,||p||_0\leq C||f||_0,
\end{equation}
where the constant $C$ depends only on $\nu$ and $\Omega$ and
$$
p_m=\frac{1}{|\Omega|}\int_\Omega pdy.
$$
\end{Lemma}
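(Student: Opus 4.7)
\textit{Plan.} The strategy is to derive the a priori estimate first; uniqueness then follows by setting $f=0$ (which forces $v=0$, $p-p_m=0$, and $|\lambda|\,|p_m|=0$, hence $p_m=0$ since $\lambda\neq 0$), and existence follows from the Fredholm property of the pencil $\mathcal S(\lambda)$ together with the fact that its index is zero (since $\mathcal S(i\xi)$ is an isomorphism for $\xi\in\Bbb R\setminus\{0\}$ and the index is locally constant in $\lambda$). A preliminary observation used throughout: integrating $\nabla_y\cdot v'+\lambda v_3=0$ over $\Omega$ and using $v|_\gamma=0$ yields $\int_\Omega v_3\,dy=0$, because $\lambda\neq 0$. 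This allows the constant component $p_m$ of the pressure to drop out of several key computations.

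The estimate combines three test-function computations in the sesquilinear form $\Bbb A$. First, testing with $(\hat v,\hat p)=(v,p)$ and using $\int v_3=0$ gives
$$
\int_\Omega(\mu|v|^2+\nu|\nabla v|^2)\,dy+2(\Re\lambda)\int_\Omega(p-p_m)\bar v_3\,dy=\int_\Omega f\cdot\bar v\,dy.
$$
Since $\Re\mu=\nu((\Im\lambda)^2-(\Re\lambda)^2)$ satisfies $|\Re\mu|\leq\nu|\lambda|^2$, and $\Im\mu=\omega-2\nu(\Re\lambda)(\Im\lambda)$ satisfies $|\Im\mu|\geq|\omega|-\nu|\lambda|^2$, taking real and imaginary parts separately, invoking Poincar\'e's inequality $||v_3||_0\leq C_P||\nabla v||_0$, and applying Young's inequality give, for $|\lambda|<\delta$ sufficiently small,
$$
(|\omega|+1)||v||_0+(|\omega|+1)^{1/2}||\nabla v||_0\leq C(||f||_0+|\lambda|\,||p-p_m||_0).
$$
Second, since $\int_\Omega(p-p_m)\,dy=0$, Bogovskii's lemma produces $w\in H_0^1(\Omega;\Bbb C^2)$ with $\nabla_y\cdot w=p-p_m$ and $||w||_1\leq C||p-p_m||_0$; testing with $((w_1,w_2,0),0)$ and rearranging yields
$$
||p-p_m||_0\leq C\bigl((|\omega|+1)||v||_0+||\nabla v||_0+||f||_0\bigr).
$$
Third, fix any real $\phi\in H_0^1(\Omega)$ with $\int_\Omega\phi\,dy=1$ and test with $((0,0,\phi),0)$; the $-\int p\,\overline{(-\bar\lambda\hat v_3)}$ piece of $\Bbb A$ evaluates to $\lambda p_m+\lambda\int(p-p_m)\phi\,dy$, so
$$
|\lambda|\,|p_m|\leq C\bigl(||f||_0+(|\omega|+1)||v||_0+||\nabla v||_0+|\lambda|\,||p-p_m||_0\bigr).
$$

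To close the loop, substitute the second bound into the first: the correction term becomes $C\delta\bigl((|\omega|+1)||v||_0+||\nabla v||_0+||f||_0\bigr)$, and since $(|\omega|+1)^{1/2}\geq 1$ its first two summands can be absorbed into the left-hand side of the energy estimate once $\delta$ is taken small enough depending only on $\Omega$ and $\nu$. This yields the bounds $(|\omega|+1)||v||_0+(|\omega|+1)^{1/2}||\nabla v||_0+||p-p_m||_0\leq C||f||_0$, and the third inequality then gives $|\lambda|\,|p_m|\leq C||f||_0$, so $|\lambda|\,||p||_0\leq|\lambda|\,||p-p_m||_0+|\lambda|\,|p_m|\,|\Omega|^{1/2}\leq C||f||_0$. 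The main delicacy is to ensure that the smallness threshold $\delta$ is independent of $\omega$: for bounded $|\omega|$ the argument reduces essentially to a stationary Stokes estimate (coercivity via $\nu||\nabla v||_0^2$ and Poincar\'e), while for large $|\omega|$ the prefactor $|\omega|$ on $||v||_0$ is extracted from $\Im\mu\sim\omega$; verifying that the pressure cross-term contributes only a fixed fraction of the principal terms, uniformly in $\omega$, is the key point.
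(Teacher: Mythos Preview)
Your proof is correct and follows essentially the same route as the paper: the mean-zero observation $\int_\Omega v_3\,dy=0$, the energy identity from testing with $(v,p)$, and the Bogovski\u{\i} test with $\hat v=(w_1,w_2,0)$ to control $\|p-p_m\|_0$ are exactly the paper's three steps, and the absorption of the $|\lambda|\,\|p-p_m\|_0$ cross-term for small $|\lambda|$ is carried out in the same way.

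The one genuine difference is your final step. To control the constant part $p_m$ (equivalently $|\lambda|\,\|p\|_0$), the paper invokes its Lemma~\ref{L10a}, solving $\nabla_y\cdot w'-\bar\lambda w_3=p$ with $\|w\|_1\le C\frac{1+|\lambda|}{|\lambda|}\|p\|_0$ and testing with this $w$. You instead test with a fixed $\hat v=(0,0,\phi)$, $\phi\in H_0^1(\Omega)$ real with $\int\phi=1$, which isolates $\lambda p_m$ directly from the term $-\int p\,\overline{(-\bar\lambda\phi)}$. Your device is slightly more elementary (it avoids the auxiliary divergence lemma) and makes the $\omega$-independence of the constants more transparent; the paper's device has the advantage of yielding the full $\|p\|_0$ bound in one stroke rather than splitting into $p_m$ and $p-p_m$. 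Both are perfectly valid.
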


\begin{proof} From (\ref{Feb7c})) it follows that
\begin{equation}\label{Mars11ab}
\int_\Omega v_3dy=0\;\;\mbox{ when}\;\; \lambda\neq 0.
\end{equation}
Taking $\hat{v}=v$ in (\ref{Mars10b}) yields
$$
\mu\int_\Omega |v|^2dy+\nu\int_\Omega |\nabla_yv|^2dy=-2\Re\Big(\lambda\int_\Omega p\overline{v_3}dy\Big)+\Re\Big(\int_\Omega f\cdot\overline{v}dy\Big).
$$
For a small $\lambda\neq 0$, this together with (\ref{Mars11ab}) implies
$$
(1+|\omega|)||v||_0\leq C(||f||_0+|\lambda ||p-p_m||_0)
$$
and
$$
(1+|\omega|)^{1/2}||\nabla v||_0\leq C(||f||_0+|\lambda ||p-p_m||_0)
$$
with $C$ independent of $\omega$ and $\lambda$.
Taking $\hat{v}=w=(w_1,w_2,0)$ in (\ref{Mars10b}) where $w_k\in H^1_0(\Omega)$ satisfies
$$
\partial_{y_1}w_1+\partial_{y_2}w_2=p-p_m,\;\;||w||_1\leq C||p-p_m||_0,
$$
we get
$$
||p-p_m||_0^2=\mu\int_\Omega v\cdot\overline{w}dy+\nu\int_\Omega \nabla_yv\cdot\nabla_y\bar{w}dy.
$$
Therefore,
$$
||p-p_m||_0^2\leq C\big( (1+|\omega|)||v||_0+||\nabla v||_0)||p-p_m||_0
$$
and, hence,
$$
||p-p_m||_0\leq C(||f||_0+|\lambda ||p-p_m||_0)
$$
which implies
\begin{equation}\label{Apr1a}
||p-p_m||_0\leq C||f||_0\;\;\; \mbox{and}\;\; (1+|\omega|)||v||_0+(1+|\omega|)^{1/2}||\nabla v||_0\leq C||f||_0.
\end{equation}
Now taking $\hat{v}=w=(w_1,w_2,w_3)$ in (\ref{Mars10b}), where $w_k\in H^1_0(\Omega)$ is subject to
$$
\partial_{y_1}w_1+\partial_{y_2}w_2-\bar{\lambda}w_3=p-p_m,\;\;||w||_1\leq C\frac{1+|\lambda|}{|\lambda|}||p-p_m||_0,
$$
we obtain
$$
||p||_0\leq C\frac{1+|\lambda|}{|\lambda|}||f||_0.
$$
The last inequality together with (\ref{Apr1a}) gives (\ref{Feb20ba}).
\end{proof}

Now we can describe properties of the pencil ${\mathcal S}$ in a neighborhood of the imaginary axis $\Re\lambda=0$.


\begin{Lemma}\label{Lem1a} There exist $\beta^*>0$ such that the following assertion are valid:

{\rm (i)} the only eigenvalue of ${\mathcal S}$ in the strip $|\Re\lambda |<\beta^*$ is zero;

{\rm (ii)}  if $\beta\in (-\beta^*,0)\cup (0,\beta^*)$ then for $f\in L^2(\Omega)$, $h=0$ and $\Re\lambda=\beta$ the problem (\ref{Feb7b})--(\ref{Feb7d}) has a unique solution in $H^1_0(\Omega)^3\times L^2(\gamma)$ and its norm is estimated as follows:
\begin{equation}\label{Mars10a}
(1+|\lambda|^2+|\omega|)\,||v||_0+(1+|\lambda|^2+|\omega|)^{1/2}||\nabla v||_0+||p||_0\leq C||f||_0,
\end{equation}
 where the constant $C$ may depend on $\beta$, $\nu$ and $\Omega$.

\end{Lemma}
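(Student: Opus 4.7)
\textbf{Proof plan for Lemma \ref{Lem1a}.}
The plan is to combine the resolvent bounds of Lemmas \ref{LEk1}--\ref{LEk2} along $\Re\lambda=0$ with a perturbation argument in a strip of fixed width, and then use the discreteness of the spectrum of the Fredholm pencil $\mathcal{S}(\cdot)$ from Section \ref{S19a} to clean up a compact portion of the strip.

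\emph{Part (i), no eigenvalues in $|\Re\lambda|<\beta^*$ except $\lambda=0$.} By Lemmas \ref{LEk1} and \ref{LEk2}, $\mathcal{S}(\lambda)$ is an isomorphism at every $\lambda\in i\mathbb{R}\setminus\{0\}$ and in the punctured disk $\{0<|\lambda|<\delta\}$. Since $\mathcal{S}$ is a holomorphic Fredholm pencil with discrete spectrum, each such point admits an open $\mathbb{C}$-neighborhood free of eigenvalues. The only obstruction to a uniform strip is accumulation of eigenvalues at $|\Im\lambda|=\infty$, which I would rule out by the decomposition
$$
\mathcal{S}(\beta+i\xi)=\mathcal{S}(i\xi)+\mathcal{R}(\beta,\xi),
$$
where $\mathcal{R}(\beta,\xi)$ multiplies each velocity component $v_j$ by $\mu(\beta+i\xi)-\mu(i\xi)=-\nu\beta(\beta+2i\xi)$, adds $\beta p$ to the third velocity equation, and adds $\beta v_3$ to the divergence equation. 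In the weighted norm
$$
\|(v,p)\|_\xi=(1+|\omega|+|\xi|^2)\|v\|_0+(1+|\omega|+|\xi|^2)^{1/2}\|\nabla v\|_0+\|p\|_0
$$
natural to Lemma \ref{LEk1}, the composition $\mathcal{S}(i\xi)^{-1}\mathcal{R}(\beta,\xi)$ has operator norm at most $C_0|\beta|$ with $C_0$ independent of $(\omega,\xi)$ as long as $|\xi|\ge\xi_0$: the $|\beta||\xi|$-growth of the velocity multiplier is absorbed by the $(1+|\omega|+|\xi|^2)^{-1/2}$ gain of the $H^1$-bound, the $\beta p$ contribution by the $L^2$-bound on $p$, and the divergence perturbation $\beta v_3$ by solving an auxiliary divergence problem through Lemma \ref{L10a}, whose constants stay $\xi$-uniform because $|\lambda|\ge|\xi|\ge\xi_0$. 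Fixing $\beta^*<1/C_0$ secures Neumann invertibility for $|\xi|\ge\xi_0$; shrinking $\beta^*$ further to avoid the finitely many eigenvalues in the compact set $\{|\Re\lambda|\le\beta^*,\ \delta\le|\Im\lambda|\le\xi_0\}$ completes (i).

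\emph{Part (ii), the estimate \eqref{Mars10a}.} On the line $\Re\lambda=\beta$ with $|\xi|\ge\xi_0$ the same Neumann inverse
$$
\mathcal{S}(\lambda)^{-1}=\bigl(I+\mathcal{S}(i\xi)^{-1}\mathcal{R}(\beta,\xi)\bigr)^{-1}\mathcal{S}(i\xi)^{-1}
$$
transports the quantitative bound of Lemma \ref{LEk1} to the shifted line and yields \eqref{Mars10a}, since $1+|\omega|+|\lambda|^2\asymp 1+|\omega|+|\xi|^2$ on that range. For the complementary range $|\xi|<\xi_0$ I invoke Lemma \ref{LEk2} near $\lambda=0$, and on the compact annular region $\{|\Re\lambda|=\beta,\ \delta\le|\lambda|\le\xi_0+\beta^*\}$ redo the energy estimate of Lemma \ref{LEk1} directly: taking $\hat v=v$ in the weak form produces the same $(1+|\omega|)$-coercivity, the only new term $2\beta\,\Re\int p\bar v_3\,dy$ being absorbed by a Cauchy--Schwarz bound $O(|\beta|\|p\|_0\|v\|_0)$ for $|\beta|<\beta^*$ small. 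Patching the two ranges gives the uniform estimate on the whole line.

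\emph{Main obstacle.} The crux is producing a strip width $\beta^*$ independent of $\xi$ as $|\Im\lambda|\to\infty$. The decisive mechanism is the exact match between the growth $|\beta||\xi|$ of the velocity multiplier in $\mathcal{R}$ and the coercivity gain $(1+|\omega|+|\xi|^2)^{-1/2}$ quantified in Lemma \ref{LEk1}: their product is $O(|\beta|)$, so that smallness of $|\beta|$ alone yields a Neumann contraction. A subsidiary technical point is that $\mathcal{R}$ is unbounded between fixed Sobolev spaces, so the iteration must be formulated in the $\xi$-scaled norm $\|\cdot\|_\xi$ rather than in $H^1_0(\Omega)^3\times L^2(\Omega)$, and one must be attentive to the divergence perturbation $\beta v_3$ whose reduction via Lemma \ref{L10a} is only $\xi$-uniform in the high-frequency regime.
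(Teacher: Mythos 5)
Your proposal takes essentially the same route as the paper: the paper's proof records the same $\beta$-scaled perturbation, namely $\mathbb{A}(\cdot;\beta+i\xi)-\mathbb{A}(\cdot;i\xi)=\nu\beta(\beta+2i\xi)\int v\cdot\bar{\hat v}+\beta\int(v_3\bar{\hat p}-p\bar{\hat v}_3)$, and invokes Lemmas~\ref{LEk1}--\ref{LEk2} to absorb it for small $\beta$, which is exactly your operator-level decomposition $\mathcal S(\beta+i\xi)=\mathcal S(i\xi)+\mathcal R(\beta,\xi)$ recast for the sesquilinear form. Your write-up is more explicit about the $\xi$-uniform Neumann iteration in the weighted norm and the handling of the divergence perturbation via Lemma~\ref{L10a}, but the underlying mechanism and the lemmas used are the same.
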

\begin{proof} First, we observe that
$$
\Bbb A(v,p;\hat{v},\hat{p};\beta+i\xi)-\Bbb A(v,p;\hat{v},\hat{p};\xi)=\nu\beta(\beta+2\xi i)\int_\Omega v\cdot\overline{\hat{v}}dy+\beta\int_\Omega (v_3\overline{\hat{p}}-p\overline{\hat{v_3}})dy.
$$
 Thus the first form is a small perturbation of the second one. Now using Lemmas \ref{LEk1} and \ref{LEk2}  for small $\beta$ we arrive at the existence of $\beta^*$, which satisfies (i). Moreover, the estimates (\ref{Feb20b}) and (\ref{Feb20bw}) are true for $\lambda=\beta+i\xi$ for a fixed $\beta\in (-\beta^*,0)\cup (0,\beta^*)$ and arbitrary real $\xi$. With this the constants in (\ref{Feb20b}) and (\ref{Feb20bw}) may depend now on $\beta$, $\nu$ and $\Omega$ only. 

\end{proof}

\subsection{Proof of Proposition \ref{Prop1} and Theorem \ref{T4}}\label{S35}

\begin{proof}
 The assertion (i) in Proposition \ref{Prop1} is obtained from Lemma \ref{Lem1a}(ii)  by using the inverse Fourier transform together with Parseval's identity.

To prove (ii) in Proposition \ref{Prop1}, we observe that
$$
({\bf V}_\pm,{\bf P}_\pm)=\frac{1}{2\pi i}\int_{\Re\lambda=\pm\beta}e^{-\lambda z}{\mathcal S}^{-1}(f(y,\lambda),0)d\lambda
$$
and the relation (\ref{Mars27a}) is obtained by applying the residue theorem.
\end{proof}

\bigskip
Now we turn to
{\bf Proof of Theorem \ref{T4}}. Let $({\bf V},{\bf P})$ be a solution to (\ref{Feb5ba}), (\ref{Feb5da}) satisfying (\ref{Feb17a}). Our first step is to construct a  representation of the solution in the form
\begin{equation}\label{Apr31ab}
({\bf V},{\bf P})=({\bf V}^{(+)},{\bf P}^{(+)})+({\bf V}^{(-)},{\bf P}^{(-)}),
\end{equation}
where ${\bf V}^{(\pm)}\in W^{1,2}_{\pm\beta}({\mathcal C})$, ${\bf P}^{(\pm)}\in L^2_{\pm\beta}({\mathcal C})$ and they solve the problem (\ref{Feb5ba}), (\ref{Feb5da}) with certain ${\bf F}={\bf F}^{(\pm)}\in L^2_{\pm\beta}({\mathcal C})$.

By the second equation in (\ref{Feb5ba}) and by (\ref{Feb5da}) the flux
\begin{equation}\label{Apr31a}
\Psi=\int_\Omega {\bf V}_3(y,z)dy\;\;\mbox{is constant}.
\end{equation}
 The vector-function $(0,0,p_0\hat{v}_3,p_0)$ with a constant $p_0$ verifies  the homogeneous problem (\ref{Feb5ba}), (\ref{Feb5da}) and its flux  does not vanish in the case $p_0\neq 0$. So subtracting it with appropriate constant $p_0$ from $({\bf V},{\bf P})$ we can reduce the proof of theorem to the case $\Psi=0$. In this way we assume in what follows that this is the case.

Let $\zeta(z)$ be a smooth cut-off function equal $1$ for large positive $z$ and $0$ for large negative $z$ and let $\zeta'$ be its derivative. We choose in (\ref{Apr31ab})
$$
({\bf V}^{(+)},{\bf P}^{(+)})=\zeta ({\bf V},{\bf P})+\zeta'({\bf W},0),\;\;({\bf V}^{(-)},{\bf P}^{(-)})=(1-\zeta )({\bf V},{\bf P})-\zeta'({\bf W},0)
$$
where the vector function ${\bf W}=({\bf W}_1,{\bf W}_2,0)$ is such that
\begin{equation}\label{Apr31b}
\nabla_y\cdot({\bf W}_1,{\bf W}_2)(y,z)={\bf V}_3(y,z).
\end{equation}
We construct solution ${\bf W}$ by solving two-dimensional Stokes problem in $\Omega$ depending on the parameter $z$:
$$
-\nu\Delta {\bf W}_k+\partial_{y_k}{\bf Q}=0,\;\;k=1,2,\;\;\partial_{y_1}{\bf W}_1(y)+\partial_{y_2}{\bf W}_2(y)=V_3(y,z)\;\;\mbox{in $\Omega$}
$$
and ${\bf W}_k=0$ on $\gamma$, $k=1,2$. This problem has a solution in $H^1_0(\Omega)^2\times L^2(\Omega)$, which is unique if we require $\int_\Omega {\bf Q}dy=0$. If we look on the dependence on the parameter $z$ it is the same as in the right-hand side. So
$$
{\bf W}_k,\partial_z{\bf W}_k,\partial^2_z{\bf W}_k\in L^2_{\rm loc}(\Bbb R;H^1_0(\Omega))\;\;\mbox{and}\;\;
{\bf Q},\partial_z{\bf Q}\in L^2(\Omega).
$$
Therefore,
$$
i\omega {\bf V}_k^{(+)}-\nu\Delta {\bf V}_k^{(+)}+\partial_{y_k}{\bf P}^{(+)}={\bf F}_k^{(+)}
$$
where
$$
{\bf F}_k^{(+)}=-\nu\zeta^{''}{\bf V}_k-2\nu\zeta'\partial_z{\bf V}_k+i\omega \zeta'{\bf W}_k-\nu \partial_z^2(\zeta'{\bf W}_k)\in (L_{2,\beta_+}\cap L_{2,\beta_-})({\mathcal C})
$$
for $k=1,2$ and
$$
{\bf F}_3^{(+)}=-\nu\zeta^{''}{\bf V}_k-2\nu\zeta'\partial_z{\bf V}_k+\zeta'{\bf P}^{(+)}\in (L_{2,\beta_+}\cap L_{2,\beta_-})({\mathcal C})
$$
Similar formulas are valid for $({\bf V}^{(-)},{\bf P}^{(-)})$ with
$$
{\bf F}_k^{(-)}=-{\bf F}_k^{(+)}.
$$
By Proposition \ref{Prop1}(ii) this implies
$$
({\bf V}^{(+)},{\bf P}^{(+)})+({\bf V}^{(-)},{\bf P}^{(-)})=(0,0,p_0\hat{v}(y),p_0z+p_1)
$$
for certain constants $p_0$ and $p_1$, which furnishes the proof of the assertion.

\section{Stokes flow in a vessel with elastic walls}\label{Sect2x}

This section is devoted to the proof of Theorem \ref{T2}.
 As in the case of the Dirichlet problem considered in Sect.\ref{Sect1} we represent solutions to the problem (\ref{Intr1})--(\ref{Feb5})  in the form (\ref{Mars20a}) (for the velocity ${\bf v}$ and the pressure $p$) and
\begin{equation}\label{Mars20b}
{\bf u}(s,z,t)=\sum_{k=-\infty}^\infty e^{2\pi kit/\Lambda}{\bf U}_k(s,z)
\end{equation}
for the displacements ${\bf u}$. The coefficients in (\ref{Mars20a}) are given by (\ref{Mars20aa}) and in (\ref{Mars20b}) by
$$
{\bf U}_k(s,z)=\frac{1}{\Lambda}\int_0^\Lambda e^{-2\pi kit/\Lambda}{\bf u}(s,z,t)dt.
$$
The above introduced  coefficients satisfy the time independent problem
\begin{equation}\label{Feb5b}
i\omega{\bf V}-\nu\Delta {\bf V}+\nabla
P={\bf F}\;\;\mbox{and}\;\;\nabla\cdot{\bf V}=0\;\;\mbox{in ${\mathcal C}$},
\end{equation}
\begin{eqnarray}\label{M1ka}
&&D(\kappa(s),-\partial_s,-\partial_z)^T\,\overline{Q}(s)
D(\kappa(s),\partial_s,\partial_z){\bf
U}(s,z)-\overline{\rho}(s)\omega^2{\bf U}(s,z)\nonumber\\
&&+K{\bf U}+\sigma{\widehat{\mathcal F}}(s,z)={\bf G},
\end{eqnarray}
\begin{equation}\label{Feb5c}
{\bf V}=i\omega {\bf U}\;\;\mbox{on $\Gamma\times\Bbb R$}
\end{equation}
where ${\bf F}=0$, ${\bf G}=0$ and $\omega =2\pi k/\Lambda$ (for forthcoming analysis it is convenient to have arbitrary right-hand sides in this problem).

\begin{Theorem}\label{T26a} Let $\omega\in\Bbb R$ and $\omega\neq 0$. Then there exists $\beta>0$ depending on $\omega$ such that the only solution to the homogeneous (${\bf F}=0$ and ${\bf G}=0$) problem (\ref{Feb5b})--(\ref{Feb5c}) subject to
\begin{eqnarray}\label{Feb17aw}
&&\int_{\mathcal C}e^{-\beta|z|} (|{\bf v}|^2+|\nabla_x {\bf v}|^2+|\nabla_x\partial_z {\bf v}|^2+|p|^2)dx\\
&&+\int_{\Gamma}e^{-\beta|z|}(|u|^2+\sum_{k=2}^3(|\nabla_{sz}u_k|^2+|\nabla_{sz}\partial_zu_k|^2))dsdz< \infty\nonumber
\end{eqnarray}
 is ${\bf V}=0$, ${\bf U}=0$ and ${\bf P}=0$.
\end{Theorem}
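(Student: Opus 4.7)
The plan is to follow the strategy already established for the Dirichlet case in Section \ref{Sect1} and exploit the Fourier--Laplace transform in the $z$-variable to reduce Theorem \ref{T26a} to a spectral statement about an operator pencil $\mathcal{S}_{\mathrm{el}}(\lambda;\omega)$ on the cross-section. Formally, one seeks solutions of the form $e^{-\lambda z}(v(y),p(y),u(s))$, and substitution into \eqref{Feb5b}, \eqref{M1ka}, \eqref{Feb5c} produces a $\lambda$-dependent system on $\Omega$ coupled to a one-dimensional elasticity equation on $\gamma$ via the trace condition $v|_\gamma=i\omega u$. The goal is to verify that, for fixed $\omega\neq 0$, this pencil is invertible on the entire imaginary axis $\Re\lambda=0$ (including the origin) and hence, by the analytic Fredholm theorem, in a strip $|\Re\lambda|<\beta(\omega)$.

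The central energy identity is obtained from the weak formulation by testing with $\hat v=v$ and simultaneously $\hat u=i\omega u$ on $\gamma$, so that the hydrodynamic force on the right of \eqref{M1ka} is paired with the trace of $v$ consistently with the kinematic coupling. Using $Q\geq q_0 I$, $\rho\geq\rho_0>0$, $k\geq k_0>0$ together with the standard Stokes a priori estimate of Sect.\ref{SEC2a} and the quadratic form
\[
\int_\gamma\langle Q D(\kappa,\partial_s,\lambda)u, D(\kappa,\partial_s,\lambda)u\rangle\,ds+\int_\gamma k|u_1|^2 ds,
\]
one obtains for $\lambda=i\xi$ an estimate controlling $v$ in $H^1_0(\Omega)^3$, $p$ in $L^2(\Omega)$ and $u$ in an appropriate space on $\gamma$ by $\|F\|+\|G\|$. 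At $\lambda=0$ the kinematic trace reduces to $v|_\gamma=i\omega u$, which, together with $\omega\neq 0$ and the positivity of $K$, eliminates the rigid mode that spoils uniqueness in the Dirichlet case; this is exactly the place where the hypothesis $\omega\neq 0$ is used.

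With the pencil invertible on $\Re\lambda=0$, continuous dependence in $\lambda$ supplies a strip $|\Re\lambda|<\beta(\omega)$ in which $\mathcal{S}_{\mathrm{el}}^{-1}$ is bounded; the dependence of $\beta$ on $\omega$ arises because the constant in the energy estimate degenerates as $\omega\to 0$. Given a solution $({\bf V},{\bf P},{\bf U})$ satisfying \eqref{Feb17aw}, I would then imitate the decomposition \eqref{Apr31ab}: multiply by a smooth cut-off $\zeta(z)$, introduce a corrector ${\bf W}$ solving a two-dimensional Stokes problem parametrised by $z$ so as to restore $\nabla\cdot{\bf V}^{(\pm)}=0$, and simultaneously correct the elastic boundary equation so that the kinematic condition ${\bf V}^{(\pm)}=i\omega{\bf U}^{(\pm)}$ on $\Gamma$ continues to hold. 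The two summands then satisfy the inhomogeneous problem with right-hand sides in both $L^2_{\beta}$ and $L^2_{-\beta}$, and the analogue of Proposition \ref{Prop1}(ii) for the elastic pencil (applied via residues at eigenvalues inside the strip) forces their sum to be a linear combination of the eigenfunctions of $\mathcal{S}_{\mathrm{el}}(\cdot;\omega)$ in that strip. Since the strip is eigenvalue-free, the sum is zero and ${\bf V}={\bf P}={\bf U}=0$.

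The main obstacle is the corrector construction: unlike the Dirichlet case, the cut-off procedure must preserve the coupled kinematic-elastic boundary condition, so ${\bf W}$ must be chosen to simultaneously cancel the non-divergence-free error \emph{and} produce a matching correction in the displacement field on $\Gamma$. A secondary difficulty is verifying the energy estimate for the pencil at $\lambda=0$ uniformly enough to pass to a full neighborhood of the imaginary axis; here the positivity of $K$ and the orthogonality relations arising from $\omega\neq 0$ are essential, and the width $\beta(\omega)$ inevitably collapses as $\omega\to 0$, explaining the weaker power weight used in the hypothesis \eqref{Mars5} of Theorem \ref{T2}.
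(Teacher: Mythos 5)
Your proposal follows essentially the same route as the paper: reduction via Fourier--Laplace transform in $z$ to a coupled operator pencil on the cross-section, Lax--Milgram/energy estimates to get invertibility on the imaginary axis, extension to a strip $|\Re\lambda|<\beta(\omega)$, a cut-off decomposition with a corrector solving the full $\lambda=0$ coupled problem (so that the kinematic--elastic boundary condition is preserved, as you correctly flag), and a residue-type uniqueness argument forcing the sum to vanish because the strip is eigenvalue-free. The one place where the proposal glosses over a genuine step is the invertibility at $\lambda=0$: the kernel of the boundary form there contains rigid motions, and the paper must combine the Stokes equations, the elastic balance equation (where $\omega\neq 0$ kills $u_2,u_3$), and a geometric argument (Lemma~\ref{La1}) to show the rigid mode is trivial --- this is not a direct consequence of ``positivity of $K$'' as you suggest, though it is indeed where $\omega\neq 0$ enters.
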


We postpone the proof of the formulated theorem to Sect.\ref{Sect27a} and in the next section we give the proof of Theorem \ref{T2}

\subsection{Proof of Theorem \ref{T2}}\label{SEC3a}

By (\ref{Feb17aw}),
\begin{equation*}
\int_{\mathcal C}(1+|x|^2)^{-N}\big(|\nabla {\bf V}_k|^2+|P_k|^2\big)dydz+\int_\Gamma (1+|x|^2)^{-N}|{\bf U}_k|^2dsdz<\infty .
\end{equation*}
Applying Theorem \ref{T26a} we get  ${\bf V}_k=0$, ${\bf P}_k=0$ and ${\bf U}_k=0$ for $k\neq 0$.
Now using Theorem \ref{T4} and consideration in forthcoming Sect.\ref{Sub6} for $\omega=0$ we arrive at the required assertion.

\subsection{System for coefficients (\ref{Feb5b})--(\ref{Feb5c})}\label{Sub1}

To formulate the main solvability result for the system (\ref{Feb5b})--(\ref{Feb5c}),
we need the following function spaces
$$
{\mathcal Y}_\beta=\{ {\bf U}=({\bf U}_1,{\bf U}_2,{\bf U}_3):{\bf U}_1\in L_2^\beta(\Gamma),\,{\bf U}_2,{\bf U}_3\in W^{1,2}_\beta(\Gamma)\}
$$
and
$$
{\mathcal Z}_\beta=\{({\bf V},{\bf U})\,:\,{\bf V}\in W^{1,2}_\beta({\mathcal C})^3,\,{\bf U}\in {\mathcal Y}_\beta,\,i\omega{\bf U}={\bf V}\;\mbox{on}\;\Gamma\}.
$$

\begin{Proposition}\label{T29a} Let $\omega\in\Bbb R$ and $\omega\neq 0$. There exist a positive number $\beta^*$ depending on $\omega$, $\Omega$ and $\nu$ such that for any $\beta\in (-\beta^*,\beta^*)$ the following assertions hold

{\rm (i)} If ${\bf F}\in L^2_{\beta }({\mathcal C})$, ${\bf G},\partial_z{\bf G}\in L^2_\beta(\Gamma)$
then the problem (\ref{Feb5b})--(\ref{Feb5c}) has a unique solution $({\bf V},{\bf U}) \in {\mathcal Z}_\beta$, $P\in L^2_\beta({\mathcal C})$  and this solution satisfies the estimate
\begin{eqnarray*}
&&||{\bf V};W^{1,2}_\beta({\mathcal C})||+||P;L^2_\beta({\mathcal C})||+||{\bf U};{\mathcal Y}_\beta||\\
&&\leq C\Big(||{\bf F};L^2_\beta({\mathcal C})||+||{\bf G};L^2_\beta(\Gamma)||+||\partial_z{\bf G};L^2_\beta(\Gamma)||\Big),
\end{eqnarray*}
where $C$ may depend on $\omega$, $\beta$, $\nu$ and $\Omega$. Moreover,
\begin{equation*}
||\partial_z{\bf V};W^{1,2}_\beta({\mathcal C})||+||\partial_z{\bf U};{\mathcal Y}_\beta||\leq C\Big(||{\bf F};L^2_\beta({\mathcal C})||+||{\bf G};L^2_\beta(\Gamma)||+||\partial_z{\bf G};L^2_\beta(\Gamma)||\Big).
\end{equation*}

{\rm (ii)} If ${\bf F}\in L^{2}_{\beta_1 }({\mathcal C})\cap L^{2}_{\beta_2 }({\mathcal C})$, ${\bf G},\partial_z{\bf G}\in L^{2}_{\beta_1 }(\Gamma)\cap L^{2}_{\beta_2 }(\Gamma)$ with $\beta_1,\beta_2\in (-\beta^*,\beta^*)$ and $({\bf V}^{(k)},{\bf U}^{(k)},{\bf P}^{(k)})\in{\mathcal Z}_{\beta_k}\times L^2_{\beta_k}({\mathcal C})$ is the solution from {\rm (i)} for $k=1,2$ respectively, then they coincide.

\end{Proposition}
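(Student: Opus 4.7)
The plan is to mimic the pencil-and-Fourier-transform strategy that was carried out for the Dirichlet problem in Proposition~\ref{Prop1}, but now with the extra unknown $\mathbf{U}$ and the elastic balance coupled through $i\omega\mathbf{U}=\mathbf{V}|_\Gamma$. Concretely, I would first apply the Fourier--Laplace transform in $z$ (replacing $\partial_z$ by $\lambda$) to \eqref{Feb5b}--\eqref{Feb5c} and define an operator pencil $\mathcal S_{\mathrm{el}}(\lambda)$ acting on triples $(v,p,u)$ living on $\Omega\times\gamma$ and subject to the trace constraint $i\omega u=v|_\gamma$. The associated sesquilinear form is the sum of the Stokes form $\mathbb A(v,p;\hat v,\hat p;\lambda)$ from Section~\ref{S19a} and the elastic form
\begin{equation*}
\mathbb B(u,\hat u;\lambda)=\int_\gamma\bigl\langle Q\,D(\kappa,\partial_s,\lambda)u,\,D(\kappa,\partial_s,\lambda)\hat u\bigr\rangle\,ds-\omega^2\!\int_\gamma\rho\,u\cdot\bar{\hat u}\,ds+\int_\gamma\langle Ku,\hat u\rangle\,ds,
\end{equation*}
with the hydrodynamic force term $\sigma\widehat{\mathcal F}$ incorporated automatically by integration by parts against test functions satisfying the same coupling $i\omega\hat u=\hat v|_\gamma$.

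Next I would establish invertibility of $\mathcal S_{\mathrm{el}}(\lambda)$ on the imaginary axis $\lambda=i\xi$, $\xi\in\mathbb R$. Testing the form against $(v,p,u)=(\hat v,\hat p,\hat u)$, the coupling $v|_\gamma=i\omega u$ converts the boundary contribution of $\mathbb A$ into the $-\omega^2\rho$ kinetic term and the $\sigma\widehat{\mathcal F}$ term into $\overline{\mathbb B}$; since $\omega\neq 0$, the trace $u=(i\omega)^{-1}v|_\gamma$ is controlled by $v$. Combining the Stokes coercivity estimates of Lemma~\ref{LEk1} on the interior with positive-definiteness of $Q$ and positivity of $K$ on $\gamma$ yields a uniform-in-$\xi$ a~priori estimate for $(v,u)$, and then the divergence-correction trick of Lemma~\ref{L10a} gives control of $p$. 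This is the main obstacle: one must simultaneously close the estimate for all three unknowns while accounting for the $-\omega^2\rho$ sign, which is overcome by exploiting that $v|_\gamma$ is fixed by $u$ through a \emph{nonzero} factor $i\omega$, so the wall mass term becomes a compact perturbation of the coercive part for fixed $\omega$.

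With invertibility on $\Re\lambda=0$ secured, a small-perturbation argument identical in spirit to the proof of Lemma~\ref{Lem1a} extends invertibility of $\mathcal S_{\mathrm{el}}(\lambda)$ to a strip $|\Re\lambda|<\beta^*$, where $\beta^*$ now depends on $\omega$ (not just on $\Omega$, $\nu$) because the size of the wall perturbation is $\omega$-dependent. Part~(i) of the proposition is then obtained by writing
\begin{equation*}
(\mathbf V,\mathbf U,\mathbf P)(x)=\frac{1}{2\pi i}\int_{\Re\lambda=\beta}e^{-\lambda z}\,\mathcal S_{\mathrm{el}}^{-1}(\lambda)\bigl(\widetilde{\mathbf F}(\cdot,\lambda),\widetilde{\mathbf G}(\cdot,\lambda)\bigr)\,d\lambda,
\end{equation*}
invoking Parseval's identity to transfer the $\lambda$-uniform pencil bound into the weighted norm estimate on $\mathcal C$ and $\Gamma$; the additional derivative estimate on $\partial_z\mathbf V$, $\partial_z\mathbf U$ follows because multiplication by $\lambda$ on the transform side costs only one power and the pencil bound absorbs it. The hypothesis $\partial_z\mathbf G\in L^2_\beta(\Gamma)$ is exactly what is needed to make $\widetilde{\mathbf G}(\cdot,\lambda)$ decay in $\lambda$ enough to close the boundary integrals.

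For part~(ii), let $(\mathbf V^{(k)},\mathbf U^{(k)},\mathbf P^{(k)})$ denote the two candidate solutions and consider their difference, which solves the homogeneous problem and lies in the union of the two weighted classes. Writing the inverse Fourier--Laplace representation for each on the contours $\Re\lambda=\beta_1$ and $\Re\lambda=\beta_2$ and subtracting, the difference equals the sum of residues of $e^{-\lambda z}\mathcal S_{\mathrm{el}}^{-1}(\lambda)(\widetilde{\mathbf F},\widetilde{\mathbf G})$ at eigenvalues of $\mathcal S_{\mathrm{el}}$ lying strictly between the two contours; but by part~(i) there are none in $|\Re\lambda|<\beta^*$, so the residue sum vanishes and the two solutions coincide. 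This closes the proof.
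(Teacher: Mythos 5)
Your proposal follows essentially the same route as the paper: pass by the Fourier--Laplace transform in $z$ to the operator pencil $\Theta(\lambda)$ (the paper's Section~\ref{Sub3}), establish invertibility on $\Re\lambda=0$ (Lemma~\ref{Lka1}), extend to a strip $|\Re\lambda|<\beta^*$ by perturbation (Proposition~\ref{Pr29a}), and then obtain part~(i) by the inverse transform with Parseval's identity and part~(ii) by shifting the contour and invoking the absence of eigenvalues in the strip. One point where the sketch compresses the real work: you assert a ``uniform-in-$\xi$ a~priori estimate'' from coercivity plus positivity of $Q$ and $K$, but the $-\omega^2\rho$ mass term spoils coercivity, and the paper actually proves the quantitative estimate only for $|\xi|\ge\rho(|\omega|)$ by playing the real part (\ref{Mars13a}) against the imaginary part (\ref{Mars16aa}) of the form, handling moderate $\xi$ separately via the Fredholm alternative together with a direct verification that $\Theta(i\xi)$ has trivial kernel; your ``compact perturbation'' remark points in the right direction but needs to be turned into this two-regime argument to yield the explicit weighted bound (\ref{Mars24ca}) that Parseval requires.
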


\subsection{Transformations of the problem (\ref{Intr1}), (\ref{M1k}),  (\ref{Feb5})}\label{Sub2}

It is convenient to rewrite the Stokes system (\ref{Feb5ba}) in the form
\begin{equation}\label{Feb19ab}
i\omega{\bf V}_j-\sum_{i=1}^3\partial_{x_i}T_{ji}({\bf V})={\bf F}_j,\;\; j=1,2,3,\;\nabla\cdot {\bf V}=0\;\;\mbox{in ${\mathcal C}$}.
\end{equation}
where
\begin{equation}\label{Feb19aa}
T_{ji}({\bf V})=-p\delta_{i,j}+\nu \big(\partial_{x_i}{\bf V}_j+\partial_{x_j}{\bf V}_i\big)
\end{equation}
and $\delta_{i,j}$ is the Kronecker delta. Moreover, relations (\ref{M1k}) and (\ref{Feb5}) become
\begin{eqnarray}\label{Feb21a}
&&D(\kappa(s),-\partial_s,-\partial_z)^T\,Q(s)
D(\kappa(s),\partial_s,\partial_z){\bf
U}(s,z)\nonumber\\
&&-\rho(s)\omega^2{\bf U}(s,z)+K{\bf U}+\sigma{\widehat{\mathcal F}}(s,z)={\bf G}(n,s,z)
\end{eqnarray}
and
\begin{equation}\label{Feb5cq}
{\bf V}=i\omega {\bf U}\;\;\mbox{on $\Gamma$.}
\end{equation}
Here ${\widehat{\mathcal F}}=e^{-i\omega t}{\mathcal F}$.

Next step is the application of the Fourier transform. We set
$$
{\bf V}(x)=e^{\lambda z}v(y), P(x)=e^{\lambda z}p(y)\;\;\mbox{and}\;\;{\bf U}(x)=e^{\lambda z}u(y).
$$
As the result we obtain the system
\begin{equation}\label{Feb21aa}
i\omega v_j-\sum_{i=1}^2\partial_{x_i}t_{ji}(v;\lambda)+\lambda t_{j3}(v;\lambda)=f_j\;\; j=1,2,3,
\end{equation}
\begin{equation}\label{Feb21ab}
\nabla_y\cdot v'+\lambda v_3=h\;\;\mbox{in $\Omega$,}
\end{equation}
where
\begin{equation}\label{Feb21aba}
t_{ij}(v,p;\lambda)=-p\delta_j^i+2\nu\varepsilon_{ij}(v;\lambda),
\end{equation}
\begin{eqnarray*}
&&\varepsilon_{ij}(v)=\frac{1}{2}\Big(\partial_{x_i}v_j+\partial_{x_j}v_i\Big),\;i,j\leq 2,\\
&&\varepsilon_{i3}(v;\lambda)=\hat{\varepsilon}_{3i}(v;\lambda)=\frac{1}{2}\Big(\lambda v_i+\partial_{x_i}v_3\Big),\;i=1,2,\\
&&\varepsilon_{33}(v;\lambda)=\lambda v_3.
\end{eqnarray*}
The equations (\ref{Feb21a}) and (\ref{Feb5cq}) take the form
\begin{eqnarray}\label{Feb21az}
&&D(\kappa(s),-\partial_s,-\lambda)^T\,Q(s)
D(\kappa(s),\partial_s,\lambda)u\nonumber\\
&&-\overline{\rho}(s)\omega^2u+K(s)u+\sigma(s)\Phi(s)=g(s)
\end{eqnarray}
and
\begin{equation}\label{Feb21ca}
v=i\omega u\;\;\mbox{on $\partial\Omega$.}
\end{equation}
Here $\Phi (s)=(\Phi_n,\Phi_z,\Phi_s)$ and
\begin{equation}\label{M2z}
\Phi_n=-p+2\nu\frac{\partial v_n}{\partial n},\;\;\Phi_s=\nu\Big
(\frac{\partial v_n}{\partial s}+\frac{\partial v_s}{\partial
n}-\kappa v_s\Big ),\;\;\Phi_z=\nu\Big (\lambda v_n+\frac{\partial v_z}{\partial n}\Big).
\end{equation}

\subsection{Weak formulation and function spaces}\label{Sub3}

Let us introduce an energy integral
$$
E({\bf v},\hat{\bf v})=\int_\Omega\sum_{i,j=1}^2\varepsilon_{ij}({\bf v})\overline{\varepsilon_{ij}(\hat{\bf v})}dy
$$
and put
$$
a(u,\hat{u};\lambda)=\int_{\partial\Omega} \langle Q(s)
D(\kappa(s),\partial_s,\lambda)
u(s),D(\kappa(s),\partial_s,-\bar{\lambda})\hat{u}(s)\rangle ds,
$$
where $\langle\cdot,\cdot \rangle$ is the euclidian inner product in $\Bbb C^3$. Since the matrix $Q$ is positive definite
\begin{equation}\label{Feb26aa}
a(u,u;i\xi)\geq c_1(|\xi|^2|u_3|^2+|\kappa u_1+\partial_s u_2|^2+|i\xi u_2+\partial_su_3|^2),
\end{equation}
where $\xi\in\Bbb R$ and $c_1$ is a positive constant independent of $\xi$. Another useful inequality is the following
\begin{equation}\label{Feb26aaa}
\int_{\partial\Omega}|v|^2dy\leq c_2||v||_0\,||v||_1,
\end{equation}
or by using Korn's inequality
\begin{equation}\label{Feb26aab}
q\int_{\gamma}|v|^2dy\leq c_3\Big(q^2||v||_0^2+ E(v,v)\Big)\;\;\mbox{for $q\geq 1$},
\end{equation}
 where $c_3$ does not depend on $q$.

To define  a weak  solution, we introduce  the vector function spaces:
$$
X=\{v=(v_1,v_2,v_3)\,:\,v\in H^1(\Omega)^3\; \},
$$
$$
Y=\{u_1\in H^{1/2}(\gamma)\,:\, u_2,\,u_3\in H^1(\gamma)\}
$$
and
$$
Z=Z_\omega=\{(v,u)\,:\,v\in X ,\; u\in Y,\;\;v=i\omega u\;\,\mbox{on}\;\gamma\}.
$$
We supply the space $Z$ with the inner product
\begin{eqnarray}\label{Mars24a}
&&\langle v,u;\hat{v},\hat{u}\rangle_0=\int_\Omega (v\cdot\overline{\hat{v}}+\sum_{j=1}^3\nabla_yv_j\cdot\nabla_y\overline{\hat{v}_j})dy\nonumber\\
&&+\int_\gamma (\partial_s u_2\partial_s\overline{\hat{u}_2}+\partial_s u_3\partial_s\overline{\hat{u}_3})ds.
\end{eqnarray}
Since $\omega\neq 0$, the norm $||u_1;H^{1/2}(\gamma)||$ is estimated by the norm of $v$ in the space $H^1(\gamma)$ therefore we do not need a term with $u_1$ and $\hat{u}_1$ in (\ref{Mars24a}), indeed.
Let also
$$
\langle v,p,u;\hat{v},\hat{p},\hat{u}\rangle_1=\langle v,u;\hat{v},\hat{u}\rangle_0+\int_\Omega p\overline{\hat{p}}dx
$$
be the inner product in $Z\times L^2(\Omega)$. 

We introduce also a sesqui-linear form corresponding to the formulation (\ref{Feb21aa}),
(\ref{Feb21ab}), (\ref{Feb21az}), (\ref{Feb21ca}):
\begin{eqnarray*}
&&\widehat{\mathcal A}(v,p,u;\hat{v},\hat{p},\hat{u};\lambda)=\int_{\Omega}\big(i\omega v\overline{\hat{v}}+2\nu\sum \hat{\varepsilon}_{ij}(v;\lambda)\overline{\hat{\varepsilon}_{ij}(\hat{v};-\bar{\lambda})}\big)dx+\int_\Omega p\overline{(\nabla_y\cdot \hat{v}'-\bar{\lambda}\hat{v}_3)}dx\\
&&+\int_\Omega (\nabla_y\cdot v'+\lambda v_3)\,\overline{\hat{p}}dx+i\omega(-\omega^2\int_{\partial\Omega}u\overline{\hat{u}}+a( u,\hat{u};\lambda)+k\int_\gamma u_1\overline{\hat{u}_1}ds).
\end{eqnarray*}
Clearly, this form is bounded in $Z\times L^2(\Omega)$. For ${\mathcal F}\in Z^*$, ${\mathcal H}\in L^2(\Omega)$ and $h\in L^2(\Omega)$  the weak formulation reads as the integral identity
\begin{equation}\label{24a}
{\mathcal A}(v,p,u;\hat{v},\hat{p},\hat{u};\lambda)={\mathcal F}(\hat{v},\hat{u})+\int_\Omega {\mathcal H}\overline{\hat{p}}dy
\end{equation}
which has to be valid for all $(\hat{v},\hat{p},\hat{u})\in Z\times L^2(\Omega)$ and $\nabla_y\cdot v'+\lambda v_3=h$ in $\Omega$, where $v'=(v_1,v_2)$.

If
$\nabla_y\cdot v'+\lambda v_3=0$, it is enough to require that
\begin{equation}\label{Mars19aa}
\widehat{\mathcal A}(v,p,u;\hat{v},0,\hat{u};\lambda)={\mathcal F}(\hat{v},\hat{u})
\end{equation}
for all $(\hat{v},\hat{u})\in Z$.

It will be useful to introduce the operator pencil in the space ${\mathcal Z}\times L^2(\Omega)$ depending on the parameter $\lambda\in\Bbb C$ by
\begin{equation}\label{Mars16a}
\widehat{\mathcal A}(v,p,u;\hat{v},\hat{p},\hat{u};\lambda)=\langle \Theta(\lambda)(v,u,p);\hat{v},\hat{p},\hat{u}\rangle_1.
\end{equation}
Clearly
\begin{equation}\label{Mars20ba}
\Theta(\lambda)\,:\, Z\times L^2(\Omega)\mapsto Z\times L^2(\Omega)
\end{equation}
 is a bounded operator pencil, quadratic  with respect to $\lambda$.

\subsection{Properties of the operator pencil $\Theta$}\label{Sub4}

We will need the following known lemma on the divergence equation
\begin{equation}\label{div}
\partial_{y_1}v_1+\partial_{y_2}v_2=h\;\;\mbox{in $\Omega$}.
\end{equation}
\begin{Lemma}\label{LK1}
There exists a linear operator
$$
L^2(\Omega)\ni h\rightarrow v'=(v_1,v_2)\in (H^1(\Omega))^2
$$
such that the equation {\rm (\ref{div})} is satisfied, $v_s=0$ on $\gamma$, $v'|_{\gamma}\in H^1(\gamma)$ and
\begin{equation}\label{Feb27a}
||v';H^1(\Omega)||+||v_n|_{\gamma};H^1(\gamma)||\leq C||h||_0.
\end{equation}
Clearly the vector function $(v,u)$ belongs to $Z$, where $v=(v_1,v_2,0)$ and $i\omega u=v|_\gamma$. Estimate (\ref{Feb27a}) implies
\begin{equation}\label{Feb27aw}
||(v,u);Z||\leq C||h||_0.
\end{equation}
\end{Lemma}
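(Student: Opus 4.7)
The strategy is to reduce to the classical Bogovskii solvability of the divergence equation with zero boundary data, after absorbing the flux of $h$ into an explicit constant-normal boundary lift. First, associate to $h$ the scalar
\begin{equation*}
c=c(h)=\frac{1}{|\gamma|}\int_\Omega h(y)\,dy,
\end{equation*}
which depends linearly and continuously on $h$ with $|c|\le C\|h\|_0$. Using a smooth cutoff supported in a tubular neighbourhood of $\gamma$ together with the curvilinear coordinates $(n,s)$, I would construct a linear extension $V'\in H^1(\Omega)^2$ satisfying $V'|_\gamma = c\,{\bf n}(s)$ (where ${\bf n}(s)$ is the outward unit normal field on $\gamma$) and $\|V';H^1(\Omega)\|\le C|c|\le C\|h\|_0$. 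By construction the tangential trace of $V'$ vanishes on $\gamma$ and its normal trace is the constant $c$.

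Set $\tilde h = h-\nabla\cdot V'\in L^2(\Omega)$. The divergence theorem gives
\begin{equation*}
\int_\Omega \tilde h\,dy=\int_\Omega h\,dy-\int_\gamma V'\cdot{\bf n}\,ds=c|\gamma|-c|\gamma|=0,
\end{equation*}
so Bogovskii's compatibility condition holds. The classical Bogovskii construction on the $C^{1,1}$ (in particular Lipschitz) domain $\Omega$ then furnishes a bounded linear operator $\tilde h\mapsto w'\in H^1_0(\Omega)^2$ with $\nabla\cdot w' = \tilde h$ and $\|w';H^1(\Omega)\|\le C\|\tilde h\|_0\le C\|h\|_0$. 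Putting $v' = w'+V'$ yields a linear map $h\mapsto v'\in H^1(\Omega)^2$ with $\nabla\cdot v' = h$, whose boundary trace coincides with that of $V'$. Hence $v_s\equiv 0$ on $\gamma$, the normal trace $v_n|_\gamma\equiv c$ is constant, and since ${\bf n}(s)\in H^1(\gamma)$ (using $\zeta\in C^{1,1}$) the trace $v'|_\gamma = c\,{\bf n}(s)$ lies in $H^1(\gamma)^2$ with the estimate (\ref{Feb27a}).

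For the final assertion, set $v = (v_1,v_2,0)$ and define $u$ through $i\omega u = v|_\gamma$, expressed in the basis $\{{\bf n},{\boldsymbol\tau},{\bf z}\}$; then $u_1 = c/(i\omega)$ and $u_2 = u_3 = 0$ on $\gamma$, so $u\in Y$ and $(v,u)\in Z$, while from (\ref{Mars24a}) the $Z$-norm reduces to $\|v;H^1(\Omega)\|$ (the $u_2,u_3$ contributions vanish) and (\ref{Feb27aw}) follows. The only non-trivial input is the Bogovskii operator on a Lipschitz domain; the main conceptual point, and thus the only potential obstacle, is the deliberate choice of a \emph{constant} normal boundary datum $c\,{\bf n}(s)$, which makes $v_n|_\gamma$ an element of $H^1(\gamma)$ for free and avoids any loss of boundary regularity that would arise from prescribing a non-trivial trace derived from $h$ itself.
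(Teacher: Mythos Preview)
Your proof is correct and follows essentially the same route as the paper's: both absorb the flux $\int_\Omega h\,dy$ via a vector field with \emph{constant} normal trace on $\gamma$ and vanishing tangential trace, then invoke the Bogovskii operator on the zero-mean remainder in $H^1_0(\Omega)^2$. The only cosmetic difference is that the paper splits $h=\check h+\tilde h$ into mean-free and constant parts first and then solves $\nabla\cdot\tilde v=\tilde h$ with $\tilde v_n=\mathrm{const}$ (obtaining an $H^2$ estimate for $\tilde v$), whereas you build the boundary lift $V'$ directly as an $H^1$ extension of $c\,{\bf n}$ and subtract its divergence from $h$; both constructions yield the same trace $v'|_\gamma=c\,{\bf n}(s)$ and the same final estimate.
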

\begin{proof} We represent $h$ as
$$
h(y)=\check{h}(y)+\tilde{h},\;\;\tilde{h}=\frac{1}{|\Omega|}\int_\Omega h(x)dx.
$$
Then $v=\check{v}+\tilde{v}$, where $\check{v}\in H^1_0(\Omega)$ solves the problem $\nabla_y\cdot\check{v}=\check{h}$ in $\Omega$ and $\tilde{v}$ is a solution to
$$
\nabla_y\cdot\tilde{v}=\tilde{h}\;\;\mbox{and}\;\;\tilde{v}_n=\tilde{c}=\frac{|\partial\Omega|}{|\Omega|}\tilde{h}.
$$
Both mappings $\check{h}\mapsto\check{v}$ and $\tilde{h}\mapsto\tilde{v}$ can be chosen linear and satisfying
$$
||\check{v};H^1(\Omega)||\leq C||\check{h};L^2(\Omega)||\;\;\mbox{and}\;||\tilde{v};H^2(\Omega)||\leq C||\tilde{h};H^1(\Omega)||
$$
respectively. This implies the required assertion.

\end{proof}

\begin{Lemma}\label{Lka1} Let $\omega\in\Bbb R$ and $\omega\neq 0$. Then the operator pencil $\Bbb C\ni\lambda\mapsto\Theta(\lambda)$ possesses the following properties:

{\rm (i)} $\Theta(\lambda)$ is a Fredholm operator for all $\lambda\in\Bbb C$ and its spectrum consists of isolated eigenvalues
of finite algebraic multiplicity. The line $\Re\lambda=0$ is free of the eigenvalues of $\Theta$.

{\rm (ii)} Let $\lambda=i\xi$, $\xi\in \Bbb R$. Then there exists  a  positive constant $\rho(|\omega|)$ which may depend on $|\omega|$ such that the solution of  problem (\ref{Mars19aa}) with ${\mathcal F}=(f,g)\in L^2(\Omega)\times L^2(\gamma)$ and  $|\xi|\geq \rho(|\omega|)$   satisfies the estimate
\begin{eqnarray}\label{Mars24ca}
&&|\xi|^2\int_{\gamma}(|\xi|^2(|u_2|^2+|u_3|^2)+|\partial_s u_2|^2+|\partial_su_3|^2)ds+\int_\Omega |p|^2dy\nonumber\\
&&+|\xi|^2\int_\Omega (|\xi|^2|v|^2+|\nabla_yv|^2)dy\leq CN(f,g;\xi)^2,
\end{eqnarray}
where
\begin{equation}\label{Apr2a}
N(f,g;\xi)=\Big(||f||_0+||g_2||_0+||g_3||_0+|\xi|^{1/2}\,||g_1||_0\Big).
\end{equation}
The constant $c$ here may depend on $\omega$ but it is independent of $\xi$.
\end{Lemma}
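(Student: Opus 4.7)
The plan is to prove (ii) first — the a priori estimate for $\lambda = i\xi$ with $|\xi|$ large — and then to derive the Fredholm property and the absence of eigenvalues on the imaginary axis in (i) from (ii) together with a direct energy identity.

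For (ii), I would substitute the test triple $(\hat v, \hat p, \hat u) = (v, 0, u)$ into the weak formulation (\ref{Mars19aa}). On the imaginary axis $-\bar\lambda = \lambda$, so $\hat\varepsilon_{ij}(v;-\bar\lambda) = \hat\varepsilon_{ij}(v;\lambda)$; and since $\nabla_y\cdot v' + \lambda v_3 = 0$ together with $\lambda + \bar\lambda = 0$, both pressure couplings cancel. What remains is
\begin{equation*}
i\omega\|v\|_0^2 + 2\nu\sum_{i,j}\|\hat\varepsilon_{ij}(v;i\xi)\|_0^2 + i\omega\bigl[-\omega^2\|u\|_{L^2(\gamma)}^2 + a(u,u;i\xi) + k\|u_1\|_{L^2(\gamma)}^2\bigr] = \mathcal F(v,u).
\end{equation*}
The real part, together with Korn's inequality and the explicit formulas $\hat\varepsilon_{i3} = (i\xi v_i + \partial_{x_i}v_3)/2$ and $\hat\varepsilon_{33} = i\xi v_3$, controls $|\xi|^4\|v\|_0^2 + |\xi|^2\|\nabla v\|_0^2$. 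The imaginary part, using the coercivity (\ref{Feb26aa}) of $a(u,u;i\xi)$ and the trace inequality (\ref{Feb26aab}) to absorb $-\omega^2\|u\|_{L^2(\gamma)}^2$ once $|\xi|\geq\rho(|\omega|)$, delivers the displacement bounds for $u_2, u_3$. For the pressure, I would use Lemma \ref{LK1} to construct $w \in H^1(\Omega)^2$ with $\nabla_y\cdot w = p - p_m$ and $w|_\gamma$ controlled, extend it to an admissible $Z$-test pair via the wall condition, and read off $\|p\|_0$ from (\ref{Mars19aa}). The factor $|\xi|^{1/2}$ multiplying $\|g_1\|_0$ in $N$ arises from pairing the normal force $g_1$ with the trace of $v_n$ through (\ref{Feb26aab}).

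Given (ii), the assertions in (i) follow from standard abstract arguments. Since $\Theta(i\xi_0)$ is invertible for some large $|\xi_0|$, and the difference $\Theta(\lambda) - \Theta(i\xi_0)$ consists of sesquilinear pairings in which at least one factor is undifferentiated, Rellich's compact embeddings $H^1(\Omega)\hookrightarrow L^2(\Omega)$ and $H^1(\gamma)\hookrightarrow L^2(\gamma)$ show that $\Theta(\lambda) = \Theta(i\xi_0) + K(\lambda)$ with $K(\lambda)$ compact. Hence $\Theta(\lambda)$ is Fredholm of index zero for every $\lambda \in \mathbb C$, and the analytic Fredholm theorem gives isolated eigenvalues of finite algebraic multiplicity. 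For the absence of eigenvalues on $\Re\lambda = 0$ with $\xi \neq 0$: in the homogeneous case the energy identity above forces $\hat\varepsilon_{ij}(v;i\xi) = 0$ for all $i,j$; in particular $i\xi v_3 = 0$ gives $v_3 = 0$, then $\hat\varepsilon_{i3} = i\xi v_i/2 = 0$ gives $v' = 0$, the wall condition $v = i\omega u$ with $\omega \neq 0$ gives $u = 0$, and the Stokes equations then force $p$ to be constant (hence zero, up to normalization).

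The main obstacle is the borderline case $\xi = 0$: the real-part identity only pins down $v'$ as a two-dimensional rigid motion and $v_3$ as a constant, so additional input is required. To handle it, I would extract from the third Stokes equation $i\omega v_3 = \nu\Delta_y v_3$ together with $v_3 = i\omega u_3$ on $\gamma$ that $v_3 = 0$, and from the in-plane Stokes equations that $\nabla_y p = -i\omega v'$ is a consistent gradient only when the rotational part of $v'$ vanishes. The remaining constants $v' = (a_1, a_2)$ and the additive constant in $p$ are then forced to zero by testing the elasticity equation — which then carries the right-hand side $\sigma(-p, 0, 0)^T$ — against suitable $\hat u$, exploiting the strict positivity $k \geq k_0 > 0$ of $K$. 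Book-keeping the dependence of $\rho(|\omega|)$ on $\omega$ — specifically how large $|\xi|$ must be to absorb $-\omega^2\|u\|_{L^2(\gamma)}^2$ via $a(u,u;i\xi) \gtrsim |\xi|^2(\|u_2\|_{L^2(\gamma)}^2 + \|u_3\|_{L^2(\gamma)}^2)$ — is the other technical point that ties the threshold to $|\omega|$ and prevents the estimate from being uniform in $\omega$.
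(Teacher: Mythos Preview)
Your approach is essentially the paper's: test with $(v,0,u)$, separate real and imaginary parts, use the coercivity (\ref{Feb26aa}) and the trace bound (\ref{Feb26aab}), recover $p$ through the divergence lemma (Lemma~\ref{LK1}), and deduce the Fredholm property from compactness of $\Theta(\lambda)-\Theta(i\xi_0)$. Two points, however, need correction.

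First, in (ii) the sentence ``the real part \dots\ controls $|\xi|^4\|v\|_0^2+|\xi|^2\|\nabla v\|_0^2$'' conflates two separate steps. The real part alone yields only $\sum\|\hat\varepsilon_{ij}(v;i\xi)\|_0^2$; from this $|\xi|^2\|v_3\|_0^2$ is immediate, but $|\xi|^2\|v'\|_0^2$ must still be extracted from the cross terms $\|i\xi v'+\nabla_y v_3\|_0^2$ by expanding the square and integrating the mixed term by parts (this is precisely the manoeuvre (\ref{Mars17a})--(\ref{Feb26b}) in the paper, and it is where the boundary terms in $u$ re-enter). Only after this does the bootstrap against $N(f,g;\xi)$ produce the extra factor $|\xi|^2$.

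Second, and more substantively, your treatment of $\xi=0$ misidentifies what kills the translational constants. Your curl argument for $c_0=0$ is valid and indeed more direct than the paper's route, but the positivity of $k$ is \emph{not} what eliminates the remaining $v'=(a_1,a_2)$. The operator $K$ acts only on the normal component $u_1$; the first elasticity equation merely relates $u_1$ to $p$ and forces nothing to vanish. What the paper actually uses is that, once $Du=0$, the \emph{tangential} component of (\ref{Feb21az}) reads $-\rho\omega^2 u_2=0$, hence $u_2=0$ (here $\omega\neq 0$ is the crucial hypothesis, not $k\geq k_0$). Via $v=i\omega u$ this gives $v_s\equiv 0$ on $\gamma$, i.e.\ $a_1\zeta_1'(s)+a_2\zeta_2'(s)=0$ for all $s$, and a short geometric argument (Lemma~\ref{La1}) then forces $a_1=a_2=0$. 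Without this step your sketch does not close at $\xi=0$.
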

\begin{proof} Let $\lambda=i\xi$ and
$$
{\mathcal X}={\mathcal X}(\lambda)=\{(v,u)\in {\mathcal Z}_\omega\,:\;\nabla\cdot v'+i\xi v_3=0\}.
$$

(i)
Consider the integral identity
\begin{equation}\label{Feb26a}
{\mathcal A}(v,0,u;\hat{v},\hat{u};i\xi)={\mathcal F}(\hat{v},\hat{u}) \;\;\mbox{ $\forall (\hat{v},\hat{u})\in{\mathcal X}$}.
\end{equation}
 We want to apply the Lax-Milgram lemma to find solution $(v,u)\in {\mathcal X}$. First, we note that
\begin{equation}\label{Mars13a}
\Re{\mathcal A}(v,0,u;v,u;i\xi)\geq c\big( I(v,v)+|\lambda|^2\int_\Omega |v_3|^2dy+\int_\Omega |\lambda v'+\nabla_yv_3|^2dy\big)
\end{equation}
and
\begin{eqnarray}\label{Mars16aa}
&&|\Im{\mathcal A}(v,0,u;v,u;i\xi)|\geq c|\omega|\Big (\int_\Omega |v|^2dy+\int_\gamma (k|u_1|^2-\omega^2|u|^2)ds\nonumber\\
&&+\int_\gamma(|\xi|^2|u_3|^2+|\kappa u_1+\partial_s u_2|^2+|i\xi u_2+\partial_su_3|^2)ds\Big),
\end{eqnarray}
where the constant $c$ does not depend on $\omega$ and $\xi$.

We use the representation
\begin{equation}\label{Mars17a}
|i\xi v+\nabla_yv_3|^2=(1-\tau)|i\xi v+\nabla_yv_3|^2+\tau|\xi|^2|v|^2+\tau|\nabla_yv_3|^2+2\tau\Re(i\xi v\cdot\nabla_y\overline{v_3}),
\end{equation}
where $\tau\in (0,1]$. Let us estimate the last term in (\ref{Mars17a}).
We have
\begin{eqnarray*}
&&\int_\Omega  v\cdot\nabla_y\overline{v_3}dx=-\int_\Omega \nabla_y\cdot v\,\overline{v_3}dx+\int_{\partial\Omega}v_n\overline{v_3}ds\\
&&=-\int_\Omega (\hat{\varepsilon}_{11}(v)+\hat{\varepsilon}_{22}(v))\overline{v_3}dx+|\omega|^2\int_{\partial\Omega}u_n\overline{u_3}ds.
\end{eqnarray*}
Since
\begin{equation}\label{Feb26b}
\Big|\xi\int_\Omega (\hat{\varepsilon}_{11}(v)+\hat{\varepsilon}_{22}(v))\overline{v_3}dx\Big|\leq \frac{1}{2}|\xi|^2\int_\Omega |v_3|^2dx+\int_\Omega( |\hat{\varepsilon}_{11}(v)|^2+|\hat{\varepsilon}_{22}(v)|^2)dy,
\end{equation}
we derive that
$$
2\tau|\Re(i\xi v\cdot\nabla_y\overline{v_3})|\leq C_\omega \tau\big(\int_\gamma (|\xi|^2|u_3|^2+|u|^2)ds+|\xi|^2\int_\Omega |v_3|^2dx+I(v,v)\big)
$$
Using above inequalities together with (\ref{Feb26aab}) for a small $\epsilon$, we arrive at the estimate
\begin{eqnarray}\label{Mars24b}
&&|{\mathcal A}(v,0,u;v,u;i\xi)|\geq C_\omega\Big(\int_\Omega (|\xi|^2|v|^2+|\nabla_yv|^2)dy\nonumber\\
&&+\int_{\gamma}(|\xi|^2|u_3|^2+|\partial_s u_2|^2+|i\xi u_2+\partial_su_3|^2)ds\Big),
\end{eqnarray}
where $C_\omega$ is a positive constant which may depend on $\omega$ and $|\xi|$ is chosen to be sufficiently large with respect to $|\omega|+1$. On the basis of
$$
\int_{\gamma}|i\xi u_2+\partial_su_3|^2ds=\int_{\gamma}\big(|\xi u_2|^2+|\partial_su_3|^2- 2\Re(i\xi \partial_su_2\bar{u}_3)\big)ds
$$
one can continue the estimation in (\ref{Mars24b}) as follows:
\begin{eqnarray}\label{Mars24ba}
&&|{\mathcal A}(v,0,u;v,u;i\xi)|\geq C_\omega\Big(\int_\Omega (|\xi|^2|v|^2+|\nabla_yv|^2)dy\nonumber\\
&&+\int_{\gamma}(|\xi|^2|u_3|^2+|\partial_s u_2|^2+\xi^2 u_2|^2+|\partial_su_3|^2)ds\Big),
\end{eqnarray}
with possibly another constant $C_\omega$.
Application of the Lax-Milgram lemma gives existence of a unique solution in ${\mathcal X}$ and the following estimate for this solution
\begin{eqnarray}\label{Mars23a}
&&\int_\Omega (|\xi|^2|v|^2+|\nabla_yv|^2)dy\\
&&+\int_{\partial\Omega}(|\xi|^2|u_3|^2+|\partial_s u_2|^2+\xi^2 |u_2|^2+|\partial_su_3|^2)ds\leq C||{\mathcal F};{\mathcal Z}^*||^2\nonumber
\end{eqnarray}
with a constant $C$ which may depend on $\omega$ and $\xi$.
It remains to estimate the function $p$. We chose the test function $(\hat{v},\hat{u})$ in the following way: $i\omega\hat{u}=v$ on $\partial\Omega$, $\hat{v_3}=0$ and  $\hat{v}'\in H^1(\Omega)$ solves the problem
$$
\nabla_y\cdot\hat{v}'=\hat{h}\;\;\mbox{in $\Omega$}
$$
where $h\in L^2(\Omega)$. According to Lemma \ref{LK1} the mapping $h\mapsto\hat{v}'$ can be chosen linear and satisfying the estimate (\ref{Feb27a}). The pressure $p$ must satisfy the relation
\begin{eqnarray}\label{Mars23aa}
&&\int_\Omega p\overline{\hat{h}}dy={\mathcal F}(\hat{v},\hat{u})-\int_{\Omega}\big(i\omega v\overline{\hat{v}}+2\nu\sum \varepsilon_{ij}(v)\overline{\varepsilon_{ij}(\hat{v})}\big)dx\nonumber\\
&&-i\omega(\omega^2\int_{\partial\Omega}u\overline{\hat{u}}+a( u,\hat{u};\lambda)).
\end{eqnarray}
One can verify using (\ref{Mars23a}) that the right-hand side of (\ref{Mars23aa}) is a linear bounded functional with respect to $h\in L^2(\Omega)$ and therefore there exists $p\in L^2(\Omega)$ solving (\ref{Mars23aa}) and estimated by the corresponding norm of ${\mathcal F}$. Thus the operator pencil (\ref{Mars20ba}) is isomorphism for large $|\xi|$.

Since the operator $\Theta(\lambda_1)-\Theta(\lambda_2)$ is compact we obtain that the spectrum of the operator pencil $\Theta$ consists of isolated eigenvalues of finite algebraic multiplicities, see \cite{GK}.

Let us show  that the kernel of $\Theta(i\xi)$ is trivial for all $\xi\in\Bbb R$. Indeed, if $(v,u,p)\in {\mathcal Z}_\omega\times L_2(\Omega)$ is a weak solution with ${\mathcal F}=0$ then in the case $\xi\neq 0$ inequality (\ref{Mars13a}) implies $v=0$ and hence $p=0$ because $i\omega u=v$ on $\gamma$. From (\ref{Mars16a}) it follows that
$$
\int_\Omega p\overline{(\nabla_y\cdot\hat{v}'+i\xi\hat{v}_3)}dy=0.
$$
By Lemma \ref{LK1} there exists the element $(v_1,v_2,0)\in{\mathcal X}$ solving $\nabla_y\cdot\hat{v}'=p$ which gives $p=0$. In the case $\lambda=0$ we derive from (\ref{Mars13a}) $v_3=c_3$ and $(v_1,v_2)=(c_1,c_2)+c_0(y,-x)$, where $c_0,\ldots,c_3$ are constant. From (\ref{Feb7b}) it follows that
$$
v_3=0\;\,\mbox{and}\;\;i\omega v_j+\partial_{y_j}p=0,\;j=1,2.
$$
Since the vector $v$ is a rigid displacement, we have $Du=0$ due to (\ref{Mars21a}) and (\ref{Feb21ca}). Hence relation (\ref{Feb21az}) implies
$$
-\rho(s)\omega^2u(s)+Ku(s)=\sigma (p,0,0)^T.
$$
Therefore, $u_2=0$ and $-\rho(s)\omega^2u_1+ku_1=p$. By (\ref{Feb21ca}), we have
$$
c_1\zeta_1'+c_2\zeta_2'+c_0(\zeta_2\zeta_1'-\zeta_1\zeta_2')=0.
$$
In view of Lemma \ref{La1} this yields $c_0=c_1=c_2=0$. Thus, the assertion (i) is proved

(ii) Let
$$
{\mathcal F}(\hat{v},\hat{u})=\int_\Omega f\cdot\overline{\hat{v}}dy+\int_\gamma g\cdot\overline{\hat{u}}ds,\;\;f\in L_2(\Omega,\;\;g\in L_2(\gamma).
$$
Then
\begin{eqnarray*}
&&|{\mathcal F}(v,u)|\leq \Big(||f||_0+||g_2||_0+||g_3||_0+|\xi|^{1/2}\,||g_1||_0\Big)\\
&&\times\Big(||v||_0+||u_2||_0+||u_3||_0+|\xi|^{-1/2}||u_1||_0\Big).
\end{eqnarray*}
Using (\ref{Feb26aab}) and (\ref{Mars23a}9, we get
$$
|\xi|^2\big(\int_\Omega |v|^2dy+\int_\gamma (|u_2|^2+|u_3|^2+|\xi|^{-1}|u_1|^2)ds\big)\leq C|{\mathcal F}(v,u)|.
$$
Therefore,
\begin{equation}\label{Mars24c}
|\xi|^4\big(\int_\Omega |v|^2dy+\int_\gamma (|u_2|^2+|u_3|^2+|\xi|^{-1}|u_1|^2)ds\big)\leq C\Big(||f||_0^2+||g_2||_0^2+||g_3||_0^2+|\xi|\,||g_1||_0^2\Big).
\end{equation}
Furthermore,
\begin{eqnarray*}
&&\int_\Omega |\nabla_yv|^2)dy+\int_{\gamma}(|\partial_s u_2|^2+|\partial_su_3|^2)ds\leq C|{\mathcal F}(v,u)|\\
&&\leq C\Big(||f||_0+||g_2||_0+||g_3||_0+|\xi|^{1/2}\,||g_1||_0\Big)\\
&&\times\Big(||v||_0+||u_2||_0+||u_3||_0+|\xi|^{-1/2}||u_1||_0\Big)\\
&&\leq C|\xi|^{-2}\Big(||f||_0+||g_2||_0+||g_3||_0+|\xi|^{1/2}\,||g_1||_0\Big)^2
\end{eqnarray*}
where we have used (\ref{Mars24c}). The last inequality together with (\ref{Mars24c}) delivers (\ref{Mars24ca}) for the vector functions $v$ and $u$.

To obtain the estimate for $p$, we proceed as in (i).

\end{proof}

\subsection{Solvability of the problem (\ref{Feb19ab})--(\ref{Feb5cq})}\label{Sub5}

\begin{Proposition}\label{Pr29a} Let $\omega\in\Bbb R$ and $\omega\neq 0$. There exist a positive number $\beta^*$ depending on $\omega$, $\Omega$ and $\nu$ such that  the following assertions hold:

{\rm (i)} The strip $|\Re\lambda| <\beta^*$ is free of eigenvalues of the operator pencil $\Theta$.

{\rm (ii)} For $\Re\lambda=\beta\in (-\beta^*,\beta^*)$ the  estimate
\begin{eqnarray}\label{Mars24caa}
&&(|\lambda|^2+1)\int_{\gamma}((|\lambda|^2+1)(|u_2|^2+|u_3|^2)+|\partial_s u_2|^2+|\partial_su_3|^2)ds\\
&&+\int_\Omega |p|^2dy+(|\lambda|^2+1)\int_\Omega ((|\lambda|^2+1)|v|^2+|\nabla_yv|^2)dy\leq CN(f,g;\xi)^2\nonumber
\end{eqnarray}
is valid, where $N$ is given by (\ref{Apr2a}). The positive constant $C$ here may depend on $\beta$, $\omega$, $\nu$ and $\Omega$.
\end{Proposition}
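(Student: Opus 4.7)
The plan is to mimic the strategy used in Lemma \ref{Lem1a} for the Dirichlet case, transferring the properties of the pencil $\Theta(\lambda)$ from the imaginary axis (where Lemma \ref{Lka1} gives complete information) to a thin strip $|\Re\lambda|<\beta^*$ around it, viewing $\lambda=\beta+i\xi$ as a small perturbation of $i\xi$. Fredholmness combined with the triviality of the kernel on the imaginary axis from Lemma \ref{Lka1}(i) will give invertibility, while the large-$|\xi|$ a priori estimate (\ref{Mars24ca}) combined with compactness for bounded $|\xi|$ will give the uniform bound (\ref{Mars24caa}).

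First I would write out the difference $\widehat{\mathcal A}(\,\cdot\,;\beta+i\xi)-\widehat{\mathcal A}(\,\cdot\,;i\xi)$ as a polynomial in $\beta,\xi$. The strain components $\hat\varepsilon_{i3}$ and $\varepsilon_{33}$ are linear in $\lambda$, the $\lambda p$ couplings are linear in $\lambda$, and $D(\kappa,\partial_s,\lambda)$ in the boundary form $a(u,\hat u;\lambda)$ is linear in $\lambda$, so the perturbation consists of a finite sum of terms of the shape $\beta\cdot(\text{bilinear})$, $\beta\xi\cdot(\text{bilinear})$, and $\beta^2\cdot(\text{bilinear})$, each bilinear pairing living in $L^2(\Omega)$ or $L^2(\gamma)$. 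Then, for $|\xi|$ larger than some $\rho_1\ge\rho(|\omega|)$, I would use (\ref{Mars24ca}) to absorb these into the left-hand side: the key point is that the coercive bound controls $|\xi|^4\int_\Omega|v|^2$, $|\xi|^2\int_\Omega|\nabla_yv|^2$, and $|\xi|^4\int_\gamma(|u_2|^2+|u_3|^2)$, so each perturbation factor $|\beta||\xi|$ loses at most one power of $|\xi|$ and is still beaten by these weights, uniformly in $\xi$, provided $|\beta|\le\beta_1$ for some sufficiently small $\beta_1$.

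For the remaining compact regime $|\xi|\le\rho_1$, Lemma \ref{Lka1}(i) says $\Theta(i\xi)$ has trivial kernel and is Fredholm of index zero, hence is an isomorphism. Since $\lambda\mapsto\Theta(\lambda)$ is analytic in operator norm, continuity of the inverse on the compact arc $\{i\xi:|\xi|\le\rho_1\}$ together with a standard perturbation argument will produce $\beta_2>0$ such that $\Theta(\beta+i\xi)$ is invertible with uniformly bounded inverse for $|\beta|\le\beta_2$ and $|\xi|\le\rho_1$; on this set the weight $|\lambda|^2+1$ is equivalent to a constant, so the estimate (\ref{Mars24caa}) is automatic. Setting $\beta^*=\min(\beta_1,\beta_2)$ and patching the two regimes yields both (i) and (ii).

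The main obstacle is the uniform-in-$\xi$ absorption of the perturbation terms in the large-$|\xi|$ regime. The most dangerous contribution is the cross term $\beta\xi\int v\cdot\overline{\hat v}\,dy$ arising from $\hat\varepsilon_{i3}(v;\lambda)\overline{\hat\varepsilon_{i3}(\hat v;-\bar\lambda)}$, together with its boundary analogue coming from $a(u,\hat u;\lambda)$; these are weaker than the controlling terms in $\widehat{\mathcal A}(\,\cdot\,;i\xi)$ by only a single factor of $|\xi|$, so the smallness of $\beta^*$ is precisely what saves the day. A secondary technical point is the pressure contribution $\beta\int(v_3\overline{\hat p}-p\overline{\hat v_3})\,dy$: as in the Dirichlet case and as in part (i) of the proof of Lemma \ref{Lka1}, it is handled by choosing a test vector produced by Lemma \ref{LK1} so as to recover $\|p\|_0$, which is why the independent control of $p$ in (\ref{Mars24caa}) costs no extra work.
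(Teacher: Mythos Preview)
Your proposal is correct and follows essentially the same route as the paper: compute the difference $\Theta(\beta+i\xi)-\Theta(i\xi)$, observe it carries an overall factor $\beta$ (with coefficients at worst linear in $\xi$), use the large-$|\xi|$ estimate (\ref{Mars24ca}) from Lemma~\ref{Lka1}(ii) to absorb the perturbation when $|\xi|$ is large, and invoke the invertibility on the imaginary axis from Lemma~\ref{Lka1}(i) together with continuity for bounded $|\xi|$. The paper is terser but writes the difference explicitly as $\Theta(\lambda)-\Theta(i\xi)=\beta(A,B,C)^T$ with $A=-\nu(\beta+2i\xi)v+(0,0,p)$, $B=v_3$, and $C$ a first-order boundary expression in $u$; your identification of the $\beta\xi$ volume and boundary cross terms as the dangerous ones matches this exactly.
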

\begin{proof} Let $\lambda=\beta+i\xi$. 
It is straightforward to verify that
\begin{equation*}
\Theta (\lambda)(v,u,p)-\Theta (i\xi)(v,u,p)=\beta(A,B,C)^T
\end{equation*}
where
$$
A=-\nu(\beta+2i\xi)v+(0,0,p);\;\;B=v_3;
$$
\begin{equation*}
C=\frac{\nu\sigma}{i\omega}(u_1,0,0)-(\beta+2i\xi)D_0^TQD_0u+(D_1(-\partial_s)^TQD_0+D_0^TQD(\partial_s))u.
\end{equation*}
Therefore,
$$
\Theta (\lambda)-\Theta (i\xi):Z \times L_2(\Omega)\to L_2(\Omega)^3\times L_2(\gamma)^3\times L_2(\Omega)
$$
is a small operator for small $\beta$. Therefore the estimate (\ref{Mars24caa}) for large $|\xi|$ follows from (\ref{Mars24ca}).
From Lemma \ref{Lka1}(i) it follows that this can be extended on $\xi\in {\Bbb R}$ if $\beta$ is chosen sufficiently small. Thus we arrive at both assertions of the lemma.

\end{proof}

\subsection{Proof of Proposition \ref{T29a} and Theorem \ref{T26a}}\label{Sect27a}

\begin{proof}
 The assertion (i) in Proposition \ref{T29a} is obtained from Proposition \ref{Pr29a}(ii)  by using the inverse Fourier transform together with Parseval's identity.

To conclude with (ii) we observe that the same proposition provides
$$
({\bf V}_\pm,{\bf P}_\pm)=\frac{1}{2\pi i}\int_{\Re\lambda=\pm\beta}e^{-\lambda z}{\Theta}^{-1}(\lambda)(f(y,\lambda),g(s,\lambda),0)d\lambda
$$
and the assertion (ii) in Proposition \ref{T29a} is obtained by applying the residue theorem.
\end{proof}

\bigskip
{\bf Proof of Theorem  \ref{T26a}}.
Let $({\bf V},{\bf U}, {\bf P})$ be a solution to (\ref{Feb5b})--(\ref{Feb5c}) satisfying (\ref{Feb17aw}). Our first step is to construct a  representation of the solution in the form
\begin{equation}\label{Apr31abz}
({\bf V},{\bf U},{\bf P})=({\bf V}^{(+)},{\bf U}^{(+)},{\bf P}^{(+)})+({\bf V}^{(-)},{\bf U}^{(-)},{\bf P}^{(-)}),
\end{equation}
where $({\bf V}^{(\pm)},{\bf U}^{(\pm)})\in {\mathcal Z}_{\pm\beta}({\mathcal C})$, ${\bf P}^{(\pm)})\in L^2_{\pm\beta}({\mathcal C})$ and they solve the problem (\ref{Feb5b})--(\ref{Feb5c}) with certain $({\bf F},{\bf G})=({\bf F}^{(\pm)},{\bf G}^{(\pm)})$ such that ${\bf F}^{(\pm)}\in L^2_{\pm\beta}({\mathcal C})$ and ${\bf G}^{(\pm)},\partial_z{\bf G}^{(\pm)}\in L^2_{\pm\beta}(\Gamma)$.

Let $\zeta$ be the same  cut-off function as in the proof of Theorem \ref{T4}. 
We choose in (\ref{Apr31abz})
\begin{eqnarray}\label{Apr4b}
&&({\bf V}^{(+)},{\bf U}^{(+)},{\bf P}^{(+)})=\zeta ({\bf V},{\bf U},{\bf P})+\zeta'(\tilde{\bf V},\tilde{\bf U},\tilde{\bf P}),\nonumber\\
&&({\bf V}^{(-)},{\bf U}^{(-)},{\bf P}^{(-)})=(1-\zeta )({\bf V},{\bf U},{\bf P})-\zeta'(\tilde{\bf V},\tilde{\bf U},\tilde{\bf P}),
\end{eqnarray}
where the vector function $(\tilde{\bf V},\tilde{\bf U},\tilde{\bf P})$ solves the problem (\ref{24a}) for $\lambda=0$ and with ${\mathcal F}=0$, ${\mathcal H}=0$ and $h={\bf V}_3(y,z)$ i.e.
\begin{equation}\label{Apr4a}
\partial_{y_1}{\bf W}_1+\partial_{y_2}{\bf W}_2={\bf V}_3(y,z)\;\;\mbox{for $y\in\Omega$}.
\end{equation}
In this problem the variable $z$ is considered as a parameter. In order to apply Lemma \ref{Lka1}(i) we reduce the above formulation to the case $h=0$. Applying for this purpose Lemma \ref{LK1} we find a function ${\mathcal V}(y)=({\mathcal V}_1,{\mathcal V}_2,{\mathcal V}_3)(y)$ solving (\ref{Apr4a}) and satisfying (\ref{Feb27a}) or (\ref{Feb27aw}) where $i\omega {\mathcal U}={\mathcal V}$. The function $({\mathcal V},{\mathcal U})\in Z$ and the above formulation is reduced to the case $h=0$ but with some nonzero ${\mathcal F}$. Applying to the new problem Lemma \ref{Lka1}(i) we find solution satisfying
$$
||(\tilde{\bf V},\tilde{\bf U},\tilde{\bf P});Z\times L^2(\Omega)||\leq C||{\bf V}_3||_0,
$$
which depends on the parameter $z$. Since
$$
{\bf V}_2,\;\partial_z{\bf V}_3,\partial_z^2{\bf V}_3\in L^2_{\rm loc}(\Bbb R;L_2(\Omega)),
$$
we get that
$$
(\tilde{\bf V},\tilde{\bf U},\tilde{\bf P}),\partial_z(\tilde{\bf V},\tilde{\bf U},\tilde{\bf P})\in L^2_{\rm loc}(\Bbb R;Z\times L_2(\Omega))\;\;\mbox{and}\;\partial_z^2(\tilde{\bf V},\tilde{\bf U})\in L^2_{\rm loc}(\Bbb R;L_2(\Omega)\times L_2(\gamma)).
$$
Now one can verify that the vector functions (\ref{Apr4b}) satify (\ref{Feb5b})--(\ref{Feb5c}) with certain right-hand sides $({\bf F}^{(\pm)}, {\bf G}^{(\pm)})$ having compact supports. Moreover
${\bf F}^{(+)}=-{\bf F}^{(-)}$ and ${\bf G}^{(+)}=-{\bf G}^{(-)}$ and
$$
{\bf F}^{(\pm)}\in L^2_{\pm\beta}({\mathcal C}),\;\;{\bf G}^{(\pm)},\partial_z{\bf G}^{(\pm)}\in L^2_{\pm\beta}(\Gamma).
$$
Applying Theorem \ref{T29a}(ii) we get
$$
({\bf V}^{(+)},{\bf U}^{(+)},{\bf P}^{(+)})=-({\bf V}^{(-)},{\bf U}^{(-)},{\bf P}^{(-)}),
$$
which means that $({\bf V},{\bf U},{\bf P})=0$. Theorem \ref{T26a} is proved.

\subsection{The case $\omega=0$, homogeneous system}\label{Sub6}

If $\omega=0$ the system (\ref{Intr1}))--(\ref{Feb5}) becomes
\begin{equation}\label{Feb28a}
-\nu\Delta {\bf v}+\nabla p=0\;\;\mbox{and}\;\;\nabla\cdot{\bf v}=0\;\;\mbox{in ${\mathcal C}\times\Bbb R$} .
\end{equation}
\begin{equation}\label{Feb28ab}
v=0\;\;\mbox{on $\Gamma$}
\end{equation}
and
\begin{eqnarray}\label{Feb28ac}
&&D(\kappa(s),-\partial_s,-\partial_z)^T\,\overline{Q}(s)
D(\kappa(s),\partial_s,\partial_z){\bf
u}(s,z)\nonumber\\
&&+k(s){\bf u}_1(s,z)+\sigma(s){\mathcal F}(s,z)=0,
\end{eqnarray}
where ${\mathcal F}$ is given by (\ref{M2}). So we see that the system becomes uncoupled with respect to $({\bf v},p)$ and $u$. Solutions to (\ref{Feb28a})-(\ref{Feb28ab}) are
given by
$$
{\bf v}_1={\bf v}_2=0,\;\;p=p_0z+p_1\;\;\mbox{and}\;\;{\bf v}_3=p_0{\bf v}_*(y),
$$
where $p_0$, $p_1$ are constants and ${\bf v}_*$ solves the problem (\ref{M9a}). In this case the vector function ${\mathcal F}$ is evaluated as
$$
{\mathcal F}_1={\mathcal F}_n=-(p_0z+p_1),\;\;{\mathcal F}_2={\mathcal F}_s=0\;\;\mbox{and}\;\;{\mathcal F}_3={\mathcal F}_z=\nu\partial_n{\bf v}_3.
$$
Let
\begin{equation}\label{Feb28b}
Q=\left(\begin{array}{lll}
Q_{11} & Q_{12} & Q_{13}  \\
Q_{21} &Q_{22} &Q_{23}\\
Q_{31} & Q_{32} &Q_{33}
\end{array}
\right )
\end{equation}

First we consider the case $p_0=p_1=0$. Namely, we want to solve the homogeneous equation
\begin{equation}\label{Feb28ba}
(-D_0\partial_z+D_1(-\partial_s))^TQ(s)
(D_0\partial_z+D_1(\partial_s)){\bf U}+k{\bf U}_1(s)=0,
\end{equation}
where $D_0$ and $D_1$ are defined by (\ref{Intr6j}). First, we are looking for solution independent of $z$. Then it
 must satisfy
$$
\kappa {\bf U}_1+\partial_s{\bf U}_2=0,\;\;\partial_s{\bf U}_3=0\;\;\mbox{and}\;\;{\bf U}_1=0.
$$
Thus
$$
{\bf U}_1=0,\;\;{\bf U}_2=c_2,\;\;{\bf U}_3=c_3,
$$
where $c_2$ and $c_3$ are constants.

Next let ${\bf U}$ be a linear function in $z$, i.e. ${\bf U}(s,z)=z{\bf u}^0(s)+{\bf u}^1(s)$. Then
$$
D_1(-\partial_s)^TQ(s)
D_1(\partial_s){\bf u}^0+K{\bf u}^0=0,\;\;
$$
and
\begin{equation}\label{M4b}
D_1(-\partial_s)^TQ(s)
D_1(\partial_s){\bf u}^1+K{\bf u}^1+\Big(D_1(-\partial_s)^TQ(s)
D_0-D_0^TQ(s)
D_1(\partial_s)\Big){\bf u}^0=0.
\end{equation}
Since ${\bf u}^0=(0,\alpha,\beta)^T$, $\alpha$ and $\beta$ are constant,  equation (\ref{M4b}) takes the form
\begin{equation}\label{M4bz}
D_1(-\partial_s)^TQ(s)
(D_1(\partial_s){\bf u}^1+D_0{\bf u}^0)+K{\bf u}^1=0
\end{equation}
and it is solvable since the term containing ${\bf u}^0$ is orthogonal to constant vectors $(0,a_1,a_2)$.
Thus there exists linear in $z$ solutions. Let us find these solutions. We have
\begin{equation}\label{M6bb}
Q(D_1(\partial_s){\bf u}^1+D_0{\bf u}^0)=(A,B,C)^T,
\end{equation}
where
\begin{eqnarray*}
&&A=Q_{11}(\kappa {\bf u}^1_1+\partial_s{\bf u}^1_2)+Q_{13}\frac{1}{\sqrt{2}}\partial_s{\bf u}^1_3+Q_{12}\beta+\frac{1}{\sqrt{2}}Q_{13}\alpha,\\
&&B=Q_{21}(\kappa {\bf u}^1_1+\partial_s{\bf u}^1_2)+Q_{23}\frac{1}{\sqrt{2}}\partial_s{\bf u}^1_3+Q_{22}\beta+\frac{1}{\sqrt{2}}Q_{23}\alpha,\\
&&C=Q_{31}(\kappa {\bf u}^1_1+\partial_s{\bf u}^1_2)+Q_{33}\frac{1}{\sqrt{2}}\partial_s{\bf u}^1_3+Q_{32}\beta+\frac{1}{\sqrt{2}}Q_{33}\alpha.
\end{eqnarray*}
Now system (\ref{M4bz}) takes the form
\begin{equation*}
\kappa A+k{\bf u}^1_1=0,\;\;\partial_sA=0,\;\;\partial_sC=0.
\end{equation*}
This implies
$$
A=b_1,\;\;C=b_2,\;\;\kappa b_1+k{\bf u}^1_1=0,
$$
where $b_1$ and $b_2$ are constants. Therefore
\begin{equation}\label{M6a}
{\bf u}^1_1=-\frac{\kappa b_1}{k}
\end{equation}
and
\begin{equation}\label{M6aa}
(\kappa {\bf u}^1_1+\partial_s{\bf u}^1_2,\frac{1}{\sqrt{2}}\partial_s{\bf u}^1_3)^T={\mathcal R}(s)(b_1,b_2)^T-{\mathcal R}(s){\mathcal S}(\beta,\frac{1}{\sqrt{2}}\alpha)^T,
\end{equation}
where
\begin{equation*}
{\mathcal R}(s)=\left(\begin{matrix}Q_{11}&Q_{13}\\
Q_{31}&Q_{33}\\
\end{matrix}\right)^{-1},\;\;\;{\mathcal S}=\left(\begin{matrix}Q_{12}&Q_{13}\\
Q_{32}&Q_{33}\\
\end{matrix}\right).
\end{equation*}
Using (\ref{M6a}) we can write the compatibility condition for (\ref{M6aa}) as
\begin{equation}\label{M6bbz}
\int_0^{|\gamma|}{\mathcal R}(s)ds(b_1,b_2)^T+\int_0^{|\gamma|}\frac{\kappa^2}{k}ds(b_1,0)^T=\int_0^{|\gamma|}{\mathcal R}(s){\mathcal S}ds(\beta,\frac{1}{\sqrt{2}}\alpha)^T.
\end{equation}
Since ${\mathcal R}$ is a positive definite matrix, this system is uniquely solvable with respect to $(b_1,b_2)$.

Next let us look for solution to (\ref{Feb28ba}) in the form
$$
{\bf U}=\frac{1}{2}z^2{\bf u}^0+z{\bf u}^1+{\bf u}^2,
$$
where ${\bf u}^0$ and ${\bf u}^1$ just constructed above vector functions. Then equation for ${\bf u}^2$ has the form
\begin{equation}\label{M6b}
D_1(-\partial_s)^TQ(s)(
D_1(\partial_s){\bf u}^2+D_0{\bf u}^1)+K{\bf u}^2-D_0^TQ(s)(D_1(\partial_s){\bf u}^1+QD_0{\bf u}^0)=0.
\end{equation}
According to (\ref{M6bbz}) solvability of this system is equivalent to
$$
\int_0^{|\gamma|}Bds=0\;\;\mbox{and}\;\;\int_0^{|\gamma|}Cds=0.
$$
this means that $b_2=0$ and
\begin{eqnarray}\label{M6c}
&&\int_0^{|\gamma|}\Big((Q_{22},Q_{23})-(Q_{21},Q_{23}){\mathcal R}{\mathcal S}\Big)ds(\beta,\frac{1}{\sqrt{2}}\alpha)^T\nonumber\\
&&+\int_0^{|\gamma|}(Q_{21},Q_{23}){\mathcal R}ds(b_1,b_2)^T=0.
\end{eqnarray}
Furthermore, $(b_2,b_1)$ and $(\alpha,\beta)$ are connected by (\ref{M6bb}). To simplify calculation we assume from now that
$$
Q_{13}=Q_{23}=0.
$$
Then the matrixes ${\mathcal R}$ and ${\mathcal S}$ are diagonal and from (\ref{M6bbz}) it follows that $b_2=0$ implies
$\alpha=0$ and
\begin{equation}\label{M6bbb}
\int_0^{|\gamma|}Q_{11}^{-1}dsb_1+\int_0^{|\gamma|}\frac{\kappa^2}{k}dsb_1=\int_0^{|\gamma|}Q_{11}^{-1}Q_{12}ds\beta.
\end{equation}
The relation (\ref{M6c}) implies
\begin{equation*}
\int_0^{|\gamma|}\Big(Q_{22}-Q_{21}Q_{11}^{-1})\Big)ds\beta
+\int_0^{|\gamma|}Q_{21}Q_{11}^{-1}dsb_1=0.
\end{equation*}
This relation together with (\ref{M6bbb}) requires that $\beta=0$ and $b_1=0$.

If $\xi\neq 0$ then ${\bf U}=0$. Consider the case $\xi=0$ and let $p_0=0$. Then
(\ref{Feb28ac}) takes the form
\begin{equation}\label{Feb28bb}
\left(\begin{array}{lll}
\kappa & 0 & 0 \\
-\partial_s &0 &0\\
 0 & 0 &-\frac{1}{\sqrt{2}}\partial_s
\end{array}
\right )
Q
\left(\begin{array}{lll}
\kappa {\bf U}_1+\partial_s{\bf U}_2 \\
0\\
\frac{1}{\sqrt{2}}\partial_s{\bf U}_3
\end{array}
\right )+K{\bf U}=-p_1\sigma
\left(\begin{array}{lll}
1 \\
0\\
0
\end{array}
\right )
\end{equation}
This is equivalent to the following three equations
$$
\kappa\Big(Q_{11}(\kappa {\bf U}_1+\partial_s{\bf U}_2)+Q_{13}\frac{1}{\sqrt{2}}\partial_s{\bf U}_3\Big)+k{\bf U}_1=-p_1\sigma,
$$
$$
-\partial_s\Big(Q_{11}(\kappa {\bf U}_1+\partial_s{\bf U}_2)+Q_{13}\frac{1}{\sqrt{2}}\partial_s{\bf U}_3\Big)=0,
$$
$$
\partial_s\Big(Q_{31}(\kappa {\bf U}_1+\partial_s{\bf U}_2)+Q_{33}\frac{1}{\sqrt{2}}\partial_s{\bf U}_3\Big)=0
$$
This implies
\begin{equation}\label{M4a}
Q_{11}(\kappa {\bf U}_1+\partial_s{\bf U}_2)+Q_{13}\frac{1}{\sqrt{2}}\partial_s{\bf U}_3=b_1,\;\;Q_{31}(\kappa {\bf U}_1+\partial_s{\bf U}_2)+Q_{33}\frac{1}{\sqrt{2}}\partial_s{\bf U}_3=b_2,
\end{equation}
where $b_1$ and $b_2$ are constants, and
$$
\kappa b_1+k{\bf U}_1=-p_1\sigma.
$$
Hence
$$
{\bf U}_1=-\frac{p_1\sigma+\kappa b_1}{k}.
$$

Solving the system
$$
Q_{11}B_1+Q_{13}B_2=b_1,\;\;Q_{31}B_1+Q_{33}B_2=b_2,
$$
we get
$$
(B_1,B_2)^T={\mathcal R}(b_1,b_2)^T,\;\;{\mathcal R}=\left(\begin{matrix}Q_{11}&Q_{13}\\
Q_{31}&Q_{33}\\
\end{matrix}\right)^{-1}.
$$
We write the equations (\ref{M4a}) as
\begin{equation}\label{Apr6a}
\partial_s{\bf U}_2=B_1-\kappa{\bf U}_1,\;\;\partial_s{\bf U}_3=\sqrt{2}B_2.
\end{equation}
These equations has periodic solutions if
$$
\int_0^{|\gamma|}(B_1-\kappa{\bf U}_1)ds=0, \int_0^{|\gamma|}B_2ds=0.
$$
We write these equations as a system with respect to $b_1$ and $b_2$
$$
|\gamma|{\mathcal R}(b_1,b_2)^T+\int_0^{|\gamma|}\frac{\kappa^2}{k}ds(b_1,0)^T
=-\Big(\int_0^{|\gamma|}\kappa\frac{p_1\sigma}{k}ds,0\Big)^T
$$
From these relations we can find $b_1$ and $b_2$ and then solving (\ref{Apr6a}) we can find ${\bf U}_2$ and  ${\bf U}_3$.

\bigskip

\section{Appendix}

\begin{Lemma}\label{La1} Let
$$
c_1\zeta_1'(s)+c_2\zeta_2'(s)+c_0(\zeta_2(s)\zeta_1'(s)-\zeta_1(s)\zeta_2'(s))=0\;\;\mbox{for $s\in (0,|\gamma|]$},
$$
where $c_0$, $c_1$ and $c_2$ are complex constants. Then $c_0=c_1=c_2=0$.
\end{Lemma}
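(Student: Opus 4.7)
My plan is to exploit two geometric facts about the closed curve $\gamma$: it is closed (so $\zeta(0)=\zeta(|\gamma|)$), and it bounds a domain of positive area $|\Omega|>0$.

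First I would integrate the given identity over one full period $[0,|\gamma|]$. Because $\zeta$ is a closed curve, $\int_0^{|\gamma|}\zeta_j'(s)\,ds=\zeta_j(|\gamma|)-\zeta_j(0)=0$ for $j=1,2$, so the $c_1$ and $c_2$ terms drop out. The remaining integral is, by Green's theorem applied to the positively oriented boundary $\gamma$ of $\Omega$,
\begin{equation*}
\int_0^{|\gamma|}\bigl(\zeta_2(s)\zeta_1'(s)-\zeta_1(s)\zeta_2'(s)\bigr)\,ds=-2|\Omega|\neq 0.
\end{equation*}
Hence the integrated identity forces $c_0=0$.

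With $c_0=0$ the identity reduces to $c_1\zeta_1'(s)+c_2\zeta_2'(s)=0$ for every $s$. The key observation is that the unit tangent $(\zeta_1'(s),\zeta_2'(s))$ cannot lie on a single line through the origin for all $s$: if it did, then $\zeta$ would be a straight line segment in $\mathbb R^2$, contradicting the fact that $\gamma$ bounds a nonempty planar domain (equivalently, $\gamma$ is a closed simple curve of positive enclosed area). Therefore I can select two parameter values $s_1,s_2\in(0,|\gamma|]$ at which the tangent vectors $(\zeta_1'(s_1),\zeta_2'(s_1))$ and $(\zeta_1'(s_2),\zeta_2'(s_2))$ are linearly independent over $\mathbb R$ (and hence over $\mathbb C$). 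The resulting $2\times 2$ linear system for $(c_1,c_2)$ has nonzero determinant, so $c_1=c_2=0$.

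The only subtle point is justifying the existence of two linearly independent tangents; it is essentially a topological fact (a closed simple curve with constant tangent direction would be forced to degenerate), but it is immediate from the assumption that $\gamma=\partial\Omega$ for a bounded simply connected $\Omega$. Nothing in the argument actually requires the $C^{1,1}$ regularity beyond continuity of $\zeta'$, and the complex-valuedness of $c_0,c_1,c_2$ plays no role since the linear conditions obtained are over $\mathbb R$ with real coefficients.
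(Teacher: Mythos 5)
Your proof is correct. The part establishing $c_1=c_2=0$ once $c_0=0$ is essentially the paper's argument (the paper integrates $c_1\zeta_1'+c_2\zeta_2'=0$ to obtain $c_1\zeta_1+c_2\zeta_2=\text{const}$ and observes the closed curve cannot lie on a line; your "two linearly independent tangent vectors" observation is the same fact stated pointwise). But your route to $c_0=0$ differs from the paper's. The paper shifts the origin by $\alpha=(c_2/c_0,-c_1/c_0)$ so that the two linear terms are absorbed and one is left with $\zeta_2\zeta_1'-\zeta_1\zeta_2'\equiv 0$, which it then reads geometrically as the position vector being everywhere tangent to $\gamma$ (orthogonal to the normal), declared impossible. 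Your argument instead integrates the identity directly over one period: the $c_1,c_2$ terms die by periodicity and the $c_0$ term produces $-2|\Omega|\neq 0$ by Green's theorem, forcing $c_0=0$. The integration argument is arguably cleaner and more self-contained, since it replaces the paper's slightly underexplained geometric impossibility claim with an explicit computation of the signed area; in effect you have also supplied a tidy justification for the paper's final "what is impossible" step, since its reduced relation $\zeta_2\zeta_1'-\zeta_1\zeta_2'\equiv 0$ likewise contradicts $\int_0^{|\gamma|}(\zeta_2\zeta_1'-\zeta_1\zeta_2')\,ds=-2|\Omega|$.
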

\begin{proof} It is sufficient to prove assertion for real $c_0$, $c_1$ and $c_2$. Assume that $c_0=0$. Then
$c_1\zeta_1(s)+c_2\zeta_2(s)=c$ and hence the boundary $\gamma$ belongs to the line or $c_1=c_2=0$. Since the first option is impossible we obtain that both constants are zero.

So it is sufficient to prove that $c_0=0$. Assume that it is not. Moving the origin in the $(y_1,y_2)$ plane (replacing $\zeta_k$ by $\zeta_k+\alpha_k$, $k=1,2$), we arrive at the equation
$$
(\zeta_2(s)\zeta_1'(s)-\zeta_1(s)\zeta_2'(s))=0\;\;\mbox{for $s\in (0,|\gamma|]$.}
$$
The last relation means that at each point $(y_1,y_2)$ on $\gamma$ the corresponding vector is orthogonal to the normal to this curve at the same point, what is impossible.
\end{proof}

\bigskip
\noindent {\bf Acknowledgements.} V.Kozlov was supported by the Swedish Research Council (VR), 2017-03837.
S.Nazarov is supported by RFBR grant 18-01-00325.
This study was supported by Link\"oping University, and by RFBR grant 16-31-60112.

\medskip

\end{document}